\documentclass[11pt, english]{amsart}
\usepackage[utf8]{inputenc}

\usepackage{xcolor}
\usepackage{rotating}
\usepackage{pifont}
\usepackage{wasysym}

\usepackage{geometry}
\usepackage{fdsymbol}

\theoremstyle{plain}
\newtheorem{theorem}{Theorem}
\newtheorem*{conjecture*}{Conjecture}
\newtheorem{prop}[theorem]{Proposition}
\newtheorem{lemma}[theorem]{Lemma}
\newtheorem{coro}[theorem]{Corollary}
\newtheorem{remark}[theorem]{Remark}

\newcommand{\coeur}{{\varheartsuit}}

\def\CC{{\mathbb{C}}}

\def\PP{{\mathbb{P}}}
\def\QQ{{\mathbb{Q}}}\def\ZZ{{\mathbb{Z}}}

\def\lra{{\longrightarrow}}

\def\ev{\mathrm{ev}}
\def\Hdg{\mathrm{Hdg}}

\DeclareMathOperator{\pt}{pt}

\begin{document} 

\title[Quantum cohomology of Verra fourfolds]{Quantum cohomology and birational geometry \\ of Verra fourfolds}
\author{Vladimiro Benedetti, Jérémy Guéré, Laurent Manivel, Nicolas Perrin}
\date{\today}

\address{
Universit\'e C\^ote d'Azur, CNRS, Laboratoire J.-A. Dieudonn\'e, Parc Valrose, F-06108 Nice Cedex 2, France} 
\email{vladimiro.benedetti@univ-cotedazur.fr}

\address{
Univ. Grenoble Alpes, CNRS, IF, 38000 Grenoble, France}

\email{jeremy.guere@univ-grenoble-alpes.fr}

\address{
Institut de Math\'ematiques de Marseille, 
Aix-Marseille University, CNRS, I2M, UMR 7373, Marseille, France}
\email{laurent.manivel@math.cnrs.fr}

\address{
Centre de Math\'ematiques Laurent Schwartz (CMLS), CNRS, \'Ecole polytechnique,
Institut Polytechnique de Paris, F-91120 Palaiseau, France}
\email{nicolas.perrin.cmls@polytechnique.edu}

\begin{abstract}
We compute the small quantum cohomology ring of a Verra fourfold.
Using the theory of atoms recently developped by Katzarkov--Kontsevich--Pantev--Yu, and building on recent papers of the authors, we deduce that a Verra fourfold is never birational to a very general cubic fourfold, nor to a very general Gushel--Mukai fourfold, whereas it was previously known that a general Verra fourfold is birational to a general nodal Gushel--Mukai fourfold.
More precisely, we show that for every smooth cubic fourfold or smooth  Gushel--Mukai fourfold that is birational to some Verra fourfold, the primitive cohomology is isomorphic, as a rational Hodge structure, to the 
middle cohomology of some projective K3 surface.
\end{abstract} 

\maketitle

\section{Introduction}

The theory of Hodge atoms developed in \cite{kkpy} is a powerful tool to prove irrationality of fourfolds, at least under certain positive circumstances like having a nonzero Hodge number $h^{3,1}$. This happens for cubic fourfolds: irrationality of the very general cubic fourfold was the most astonishing achievement of \cite{kkpy}.
Many varieties with similar properties were  listed in \cite{bfmt}, starting with Gushel-Mukai varieties. The irrationality of a very  general Gushel-Mukai variety was established in \cite{bmp}. The next natural candidates are Verra fourfolds, whose Picard number is two in general; a general Verra fourfold is birational to a general nodal Gushel-Mukai fourfold by \cite[Corollary 5.2]{bkk} (see also \cite{grz}).




A Verra fourfold $X$ has two structures of quadric surface fibration over $\PP^2$, hence two associated  K3 surfaces of degree two $\Sigma_1$ and $\Sigma_2$. These surfaces are not isomorphic in general \cite{verra, kkm}. It follows from the work of Laszlo \cite{laszlo} that, over $\QQ$ and when the discriminant sextic curve is smooth, the primitive cohomology of  $X$ coincides with the primitive cohomology of both K3 surfaces, which implies that $\Sigma_1$ and $\Sigma_2$ are twisted derived equivalent \cite{huybrechts}.

\medskip
In order to prove our main theorem, we compute the small quantum cohomology of a Verra fourfold. We deduce, using the language of \cite{kkpy}, that a Verra fourfold has a Hodge atom with the same invariants as the Hodge atom of a K3 surface. The indecomposability of this atom follows from the fact that for a rational Verra fourfold, it has to coincide with the atom of a K3 surface appearing as a blow-up center in a weak factorization of a birational map to projective space. 

Such an atom cannot appear in a very general cubic or Gushel-Mukai fourfold. Following a similar argument to those that were used in \cite{kkpy, jg, bmp} to show that a very general cubic or Gushel-Mukai fourfold is not rational, we prove the following theorem:

\begin{theorem}
Let $X$ be a Verra fourfold.
\begin{enumerate}
    \item $X$ is not birational to a very general cubic fourfold nor to a very general Gushel-Mukai fourfold.
\item More precisely, if $Y$ is a cubic fourfold or a Gushel-Mukai fourfold birational to $X$, then there exists a projective $K3$ surface $\Sigma$ and an isomorphism of rational Hodge structures
$$H^4(Y,\mathbb{Q})_\mathrm{van} \simeq H^2(\Sigma,\mathbb{Q})(-1).$$
In particular $Y$ is not Hodge general. 
\end{enumerate}
\end{theorem}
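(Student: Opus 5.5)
The plan follows the strategy of \cite{kkpy, bmp}: decompose the quantum cohomology of each variety into Hodge atoms, and use the fact that atoms of birational varieties match up through blow-up centers.

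\textbf{Step 1: quantum cohomology of $X$.} First I compute the small quantum cohomology ring of a Verra fourfold $X$, realised as a double cover of $\PP^2\times\PP^2$ branched along a $(2,2)$ divisor. The genus-zero invariants of low degree are computed by quantum Lefschetz or by localization on the ambient space. This gives the operator $c_1(X)\star$ on $H^*(X,\QQ)$. I then check that its spectral decomposition has a generalized eigenspace on which the vanishing part $H^4(X,\QQ)_{\mathrm{van}}$ lives, and that this summand, with its Hodge numbers ($h^{3,1}=1$, $h^{2,2}_{\mathrm{van}}=19$ generically, in the transcendental-plus-algebraic sense) and Euler-type invariants, matches the atom of a K3 surface. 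Concretely, I aim to show that $X$ has an atom $\alpha$ with the invariants of $H^*(\Sigma)$ for a K3 surface, with the vanishing cohomology sitting inside $H^4$ as $H^2(\Sigma)(-1)$. The remaining eigenvalues should carry only algebraic classes of Tate type, giving atoms of points.

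\textbf{Step 2: identify the K3-type atom in $Y$.} Let $Y$ be a cubic or GM fourfold birational to $X$. By weak factorization, the birational map $X\dashrightarrow Y$ factors into blow-ups and blow-downs along smooth centers of dimension at most $2$, namely points, curves and surfaces. The atom decomposition of \cite{kkpy} then says that the multisets of atoms of $X$ and of $Y$, each enlarged by the atoms of the centers, coincide up to equivalence. The quantum spectrum of $Y$ was computed in \cite{kkpy, bmp}. It shows that $Y$ has a distinguished atom containing $H^4(Y,\QQ)_{\mathrm{van}}$, which carries a nonzero $h^{3,1}$. Since $h^{3,1}\neq0$, this atom cannot come from a point or a curve. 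It must therefore be matched either with the K3-type atom $\alpha$ of $X$, or with an atom of a surface center. In both cases it carries a weight-two Hodge structure of K3 type with $h^{2,0}=1$, coming either from $H^2(\Sigma_i)$ via Laszlo's isomorphism or from $H^2(S)$ of a surface center $S$.

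\textbf{Step 3: conclude.} In the surface-center case, let $S$ be the surface. Since $h^{2,0}(S)=1$ and the relevant atom has the numerical invariants of a K3 surface, I show that the minimal model of $S$ is a K3 surface $\Sigma$. Its transcendental lattice matches over $\QQ$, and the algebraic parts can be adjusted by Tate classes to give $H^4(Y,\QQ)_{\mathrm{van}}\simeq H^2(\Sigma,\QQ)(-1)$; I may need to use Lefschetz-type arguments on ranks of Néron–Severi groups here. In the other case, I take $\Sigma=\Sigma_1$. Part (1) follows because a very general $Y$ has no such K3 Hodge structure: its vanishing cohomology is Hodge general with $21$ or $22$ dimensions and no algebraic classes, whereas a K3 surface has a nonzero Néron–Severi group.

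\textbf{Main obstacle.} I expect Step 1 to be the hardest: the explicit Gromov–Witten computation, and in particular showing that the K3-type eigenspace does not split further. The latter is the indecomposability of the atom, which I would argue via the weak factorization to $\PP^4$ on a rational example.
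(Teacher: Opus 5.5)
Your outline follows the paper's strategy. The ingredients match: the $24$-dimensional generalized kernel $\langle\alpha,\beta,\gamma\rangle\oplus H^4(X)^{\mathrm{prim}}$ of $c_1\star$; its indecomposability, obtained by deforming to a rational Verra fourfold (the paper produces one on a Heegner divisor) whose weak factorization to $\PP^4$ must blow up a K3 surface; and a weak factorization $X\dashrightarrow Y$ along which the $h^{3,1}$-eigenspace of $Y$ is traced back either to this atom (identified with $H^\bullet(\Sigma_1)$ via Laszlo) or to a surface center. It differs from the paper, or is thinner than it, in three places.

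\textbf{The primitive part.} The paper computes the ambient product by counting $(1,0)$-lines and then using the divisor axiom and associativity. Quantum Lefschetz would also give the ambient part, but it cannot see $H^4(X)^{\mathrm{prim}}$, so your ``I then check'' needs the paper's monodromy argument. By Lee--Pirola the monodromy acts irreducibly on $H^4(X)^{\mathrm{prim}}$. Hence every invariant with a single primitive insertion vanishes, and a degree count then gives $c_1\star\gamma=0$ for primitive $\gamma$.

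\textbf{Step 3.} Hodge numbers alone do not force the minimal model of the center $S$ to be a K3 surface: minimal properly elliptic surfaces with $p_g=1$, $q=0$ have the K3 Hodge diamond. You need more input here, such as the finer eigenspace invariants the paper records ($\nu'=0$, $\gamma=1$). Likewise, the removal of the two Tate summands is not a N\'eron--Severi count. The paper works with eigenspaces over $F=\QQ((a^{\QQ}))$ and uses that $\kappa_\tau$ has rank one on both conjugate eigenspaces. It then follows the proof of Theorem~55 in Gu\'er\'e's paper to cancel the two copies of $F$.

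\textbf{Part (1).} You deduce (1) from (2): a very general $Y$ has no Hodge class in $H^4(Y,\QQ)_{\mathrm{van}}$, whereas $H^2(\Sigma,\QQ)$ of a projective K3 contains an ample class. The paper instead proves (1) independently via property $\clubsuit$: $X$ satisfies it, a very general $Y$ does not, and no point, curve or surface center can fail it. Your deduction is valid and shorter. The paper's route additionally records that every Verra fourfold satisfies $\clubsuit$.
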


\begin{remark}
$H^4(Y,\mathbb{Q})_\mathrm{\mathrm{van}}$ is the vanishing cohomology of the embedding, respectively, $Y\subset \PP^5$ for cubics and $Y\subset G(2,5)$ for Gushel-Mukai. Of course, for cubics this corresponds to the primitive cohomology.
\end{remark}

\subsection*{Comparaison to Fay's work}
Interestingly, Verra fourfolds in this argument play the same role as projective spaces. In fact, the criteria $\clubsuit$
and $\coeur$ developed in \cite{jg,kkpy} are not sufficient to decide whether a very general Verra fourfold is rational or not, a question that to date remains open.
Specifically, this is because of the special K3 geometry that Verra fourfolds have. As written above, even for a very general Verra fourfold $X$, the primitive cohomology of $X$ is isomorphic as a rational Hodge structure to the primitive cohomology of its two K3 surfaces, so that criteria in \cite{jg,kkpy} remain inconclusive regarding irrationality of $X$.
This is the reason why we look instead at the birational comparaison between Verra fourfolds and cubics/Gushel--Mukai fourfolds.

In her paper \cite{fay}, Aideen Fay looks at another interesting question, in the spirit of equivariant atoms developped by \cite{kkpy,ckk}.
Specifically, she considers \textit{symmetric} Verra fourfolds $X$ (under swapping of the $\mathbb{P}^2 \times \mathbb{P}^2$ factors) together with its involution $\epsilon$ and she addresses the question of whether there exists a regular involution $\iota$ of $\mathbb{P}^4$ and a birational map from $X$ to $\mathbb{P}^4$ commuting with these involutions.
When this fails, the pair $(X,\epsilon)$ is called $\ZZ_2$-irrational.
Her main theorem is the following:
the very general symmetric Verra fourfold is $\ZZ_2$-irrational.

Note also that computations of the small quantum product are independently obtained by \cite{fay} using different techniques (namely, quantum periods), confirming the calculations in this paper.

\bigskip\noindent {\it Acknowledgements}. We thank Aideen Fay and Tom Coates for useful discussions.
V.B. and J.G. would like to thank Ludmil Katzarkov and IMSA for the opportunity to present their results at the \textit{Hodge theory, Birational Geometry, and Atoms} conference, where many discussions about this paper occurred.
V.B., L.M. and N.P.  were supported by the project FanoHK ANR-20-CE40-0023.


\section{Verra fourfolds}

\subsection{Ambient cohomology}

A {\it Verra fourfold} $X$ is defined as a double covering $\pi: X\lra\PP^2\times\PP^2$ branched over a divisor of bidegree $(2,2)$. Like Gushel-Mukai fourfolds, Verra fourfolds have index two, but their Picard number is two.  
The  Hodge diamond is as follows (note the slight discrepancy with \cite{laterv}):
\small
$$\begin{matrix}
&&&&1&&&&\\
&&&0&&0&&&\\
&&0&&2&&0&& \\
&0&&0&&0&&0&\\
0&&1&&22&&1&&0 \\
&0&&0&&0&&0&\\
&&0&&2&&0&& \\
&&&0&&0&&&\\
&&&&1&&&&
\end{matrix}$$
\normalsize

\medskip
Let $p_1,p_2$ be the two projections $X\lra \PP^2$. Each of these realizes $X$ as a quadric surface bundle, whose discriminant locus is a plane sextic curve. The double cover of $\PP^2$ branched over such a sextic, when smooth, is a degree two K3 surface. According to \cite{kkm}, the two projections yield two  K3 surfaces $\Sigma_1$ and $\Sigma_2$ which are in general non-isomorphic, but twisted derived equivalent, in the sense that they come with Brauer classes $\beta_1$ and  $\beta_2$, respectively, such that the twisted derived categories $D^b(\Sigma_1,\beta_1)$ and $D^b(\Sigma_2,\beta_2)$ are equivalent.

\smallskip
By a slight  abuse of terminology, we define the ambient cohomology of $X$ to be the pull-back of the cohomology of $\PP^2\times\PP^2$ and denote it by $H^\bullet(X)^{\mathrm{amb}}$. Denote by $H^\bullet(X)^{\mathrm{Hod}}$ the space of Hodge classes and let $h_1$ and $h_2$ be the hyperplane classes pull-backed by the two projections $p_1$ and $p_2$.

\begin{lemma}
\label{lem_Hodgeclasses_verygeneral}
For $X$ a very general Verra fourfold, we have 
\begin{equation}\label{general}
H^\bullet(X)^{\mathrm{Hod}}=H^\bullet(X)^{\mathrm{amb}}=\QQ\langle 1, h_1, h_2, h_1^2, h_1h_2, h_2^2, h_1^2h_2, h_1h_2^2, h_1^2h_2^2\rangle .
\end{equation}
\end{lemma}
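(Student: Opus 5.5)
We have to show two things: that the Hodge classes of a very general Verra fourfold are exactly the ambient classes, and that the ambient ring is spanned by the nine listed monomials.

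\emph{Step 1: the ambient part and odd and extreme degrees.} Since $\pi$ is finite of degree two, $\pi_*\pi^*$ is multiplication by $2$. Hence $\pi^*$ is injective on rational cohomology, and the ambient cohomology is isomorphic to $H^\bullet(\PP^2\times\PP^2,\QQ)$. This ring has the basis given by the nine monomials $h_1^ah_2^b$ with $0\le a,b\le 2$, which gives the second equality in \eqref{general}.

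Reading off the Hodge diamond, the odd cohomology vanishes. Moreover $H^0$, $H^8$, $H^2$ and $H^6$ are of type $(p,p)$ and have dimensions $1,1,2,2$, which match the ambient dimensions. So Hodge and ambient classes agree in every degree except $4$. In particular $\rho(X)=2$ holds for every smooth $X$.

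\emph{Step 2: the middle degree.} Here $H^4(X,\QQ)$ has dimension $24$, and the ambient part $\langle h_1^2,h_1h_2,h_2^2\rangle$ has dimension $3$. Its orthogonal complement $V=H^4(X)_{\mathrm{van}}$ therefore has dimension $21$, with Hodge numbers $(1,19,1)$. It remains to show that for very general $X$ the space $V$ contains no nonzero rational $(2,2)$ class.

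I would use the standard Noether--Lefschetz argument on the family of all Verra fourfolds, parametrized by an open set $U\subset\PP(H^0(\PP^2\times\PP^2,\mathcal O(2,2)))$.

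1. For each nonzero rational class $\lambda\in V$, the Hodge locus $U_\lambda$ where $\lambda$ stays of type $(2,2)$ is a countable union of closed analytic (in fact algebraic) subvarieties.
2. Suppose some $U_\lambda$ were all of $U$. Then by the theorem of the fixed part, $\lambda$ would be invariant under monodromy. So it suffices to show that the monodromy action on $V$ has no nonzero invariants. The very general $X$ then avoids the countable union of the proper loci $U_\lambda$.

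\emph{Main obstacle: showing $V$ has no monodromy invariants.} I would try two routes, in this order.

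First route. Vanishing cohomology of a double cover branched along an ample divisor $D$ is generated by vanishing cycles, since the family of double covers has Lefschetz-type degenerations where $D$ acquires a node. These vanishing cycles form a single monodromy orbit because the discriminant hypersurface is irreducible. The Picard--Lefschetz formula then shows that any invariant class is orthogonal to all vanishing cycles, hence lies in the ambient part, so its component in $V$ is zero.

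Second route (fallback). Infinitesimally, compute the derivative of the period map via Jacobian-ring/Griffiths residue methods for double covers. The aim is to show that $H^{3,1}$ is moved nontrivially. This gives that the infinitesimal variation $T_U\to \mathrm{Hom}(H^{3,1},H^{2,2}_{\mathrm{van}})$ is nonzero. Since the Mumford--Tate group of $V$ is then generic ($V$ is irreducible of K3 type with $h^{3,1}=1$ and nonconstant periods), no $(2,2)$ class survives generically.

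A third option avoids monodromy: use the quadric-fibration relation of Laszlo quoted in the introduction, $V\simeq H^2(\Sigma_1,\QQ)_{\mathrm{prim}}(-1)$, which holds when the discriminant sextic is smooth. The statement then reduces to the classical fact that a very general degree two K3 surface has Picard number one. The compatibility of the isomorphism with the relevant Tate twists would need checking.
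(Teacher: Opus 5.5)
Step~1 is fine. The gap is in Step~2, item~2, and your main route depends on it. The theorem of the fixed part goes the other way: it says that monodromy-invariant classes form a sub-Hodge structure. It does \emph{not} say that a class which stays Hodge over all of $U$ is monodromy-invariant. From $U_\lambda=U$ you only get that every translate $g\lambda$ is a Hodge class. So $W=\langle g\lambda\rangle$ is a nonzero monodromy-stable subspace of $V$ made of Hodge classes. Its monodromy is finite, since it preserves a lattice and, by Hodge--Riemann, a definite form. But finite monodromy can have no invariants at all, so ``no invariants on $V$'' is not enough. For the universal family of cubic surfaces, $W(E_6)$ has no invariants on primitive cohomology, yet every primitive class is algebraic.

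What you need is that the monodromy on $V$ is \emph{irreducible}. Together with $h^{3,1}(V)=1$, which forces $W\neq V$, this gives $W=0$. Your Picard--Lefschetz set-up yields irreducibility if you run it on subspaces instead of vectors. If $s_\delta(W)=W$, then $(w,\delta)\delta\in W$ for all $w\in W$. So either $\delta\in W$, and then all its conjugates lie in $W$ and span $V$; or $W\perp\delta$ for every vanishing cycle, and then $W=0$. That the vanishing cycles span $V$ is asserted rather than proved. The paper takes this irreducibility from Lee--Pirola, though it uses it for a different purpose. Once repaired, your route differs genuinely from the paper's, and it needs no information on how much the periods vary.

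Your fallbacks have matching holes. In route~2, a nonzero differential of the period map does not make the Mumford--Tate group generic. A family with nonconstant periods can lie entirely inside a Noether--Lefschetz divisor $\lambda^\perp$. You need the infinitesimal period map $T_U\to\mathrm{Hom}(V^{3,1},V^{2,2})$ to have rank $19=h^{2,2}(V)$, so that the periods fill an open subset of the period domain.

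Your third option is exactly the paper's proof. The Tate twist is harmless, since it matches $(1,1)$-classes of $\Sigma_1$ with $(2,2)$-classes of $X$. But you omit the one input the paper does supply. As $X$ varies, the surfaces $\Sigma_1$ move in a $19$-dimensional family, so a very general $X$ gives a very general degree-two K3 surface. Without this, the Picard-number-one fact cannot be applied, because all the $\Sigma_1$ could a priori lie in a Noether--Lefschetz locus. This maximal-variation input is the same as the rank condition missing from route~2.
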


Since $h_1^2h_2^2=2\pt$,  this basis of $H^\bullet(X)^{\mathrm{Hod}}$ is Poincar\'e self-dual, up to a factor two. When (\ref{general}) holds, we say that $X$ is {\it Hodge general}. 

\proof 
It follows from  \cite{laszlo} that the transcendental lattice in $H^4(X,\QQ)$
is isomorphic to the transcendental lattice of either $\Sigma_1$ or $\Sigma_2$. When $X$ varies, these K3 surfaces move in a $19$-dimensional family. Thus for $X$ very general they must have Picard number $1$ and transcendantal lattice of rank $21$. Since $b_4(X)=24$, this ensures that the Hodge classes in $H^4(X,\QQ)$ span a three-dimensional space, which has to be $\QQ \langle h_1^2, h_1h_2, h_2^2\rangle$. \qed

\medskip
Notice that the ambient cohomology $H^\bullet(X)^{\mathrm{amb}}$ is the pullback to $X$ of the cohomology of $\PP^2\times \PP^2$. In other words, it is the invariant cohomology under the action of the involution defining the double cover $X\to \PP^2\times \PP^2$. The anti-invariant part coincides with the $\cup$-orthogonal complement of $H^\bullet(X)^{\mathrm{amb}}$ inside $H^\bullet(X)$. Clearly, this cohomology is concentrated in degree four and its dimension is $21$. Following \cite[Section II.2.2]{laszlo}, we refer to it as the primitive part of the cohomology and denote it  $H^\bullet(X)^{\mathrm{prim}}$.

\subsection{Lines and conics on Verra fourfolds}
In order to determine the whole quantum multiplication on the ambient cohomology, we only need to compute the quantum multiplication by $h_1$ or $h_2$. 
For this, as in the case of Gushel-Mukai varieties, we study ``lines" and ``conics" on a general Verra fourfold. 

By a ``line" we mean a curve of class $h_1^\vee$ (a $(1,0)$-line), or $h_2^\vee$ (a $(0,1)$-line). A $(1,0)$-line is contained in a fiber of the second projection $p_2$ from $X$ to $\PP^2$. This fiber is a double cover of $\PP^2$ branched over a conic, hence a quadratic surface. If the conic is smooth, the preimage of each tangent line is a pair of $(1,0)$-lines, whose set is therefore parametrized by two copies of the conic. If the conic is singular (but reduced for $X$ general), with a singular point $p$, the $(1,0)$-lines in the fibers are obtained in pairs as preimages of lines through $p$, and we get twice the same $(1,0)$-line when this line through $p$ is a component of the singular conic; so the set of $(1,0)$-lines in the fiber is parametrized by a double cover of $\PP^1$ branched at two points, which is again $\PP^1$. This shows that the set of $(1,0)$-lines is parametrized by a $\PP^1$-fibration over a double cover of $\PP^2$ branched over a discriminant locus which is a smooth sextic curve. We deduce:

\begin{prop} Let $X$ be a general Verra fourfold. The Hilbert scheme $F_{(1,0)}(X)$ of $(1,0)$-lines on $X$ is a smooth threefold, with a $\PP^1$-fibration over a K3 surface. 
\end{prop}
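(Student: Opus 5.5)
We set up $F_{(1,0)}(X)$ globally, as a relative version of the fiberwise analysis just given. Over each point $y\in\PP^2$ (second factor), the fiber $p_2^{-1}(y)$ is the double cover of $\PP^2\times\{y\}$ branched along the conic $C_y = D\cap(\PP^2\times\{y\})$, where $D$ is the $(2,2)$ branch divisor. A $(1,0)$-line maps isomorphically onto a line $\ell\subset\PP^2\times\{y\}$, and $\pi^{-1}(\ell)$ splits if and only if $\ell$ is tangent to $C_y$ (or passes through its singular point). So we first consider the incidence variety
\[
T=\{(y,\ell)\ :\ \ell \text{ tangent to } C_y\}\subset \PP^2\times\check\PP^2 .
\]
Fiberwise over $y$, this is the dual conic $C_y^\vee$ when $C_y$ is smooth, and a (double) line of lines through the node when $C_y$ is singular. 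The natural map $F_{(1,0)}(X)\to T$ is then a double cover: it chooses one of the two components of $\pi^{-1}(\ell)$, and it is branched exactly where the two components coincide.

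Next we Stein-factorize $F_{(1,0)}(X)\to\PP^2$ through the choice of a ruling. Each quadric surface fiber $p_2^{-1}(y)$ has two families of lines, and these families are exchanged by monodromy around the discriminant sextic $\Delta_2$. Hence the finite part of the Stein factorization is the double cover $\Sigma_2\to\PP^2$ branched along $\Delta_2$, which is the K3 surface of degree two, smooth by generality. Over a point $s\in\Sigma_2$ with image $y\notin\Delta_2$, the fiber is one ruling of a smooth quadric, namely a $\PP^1$ identified with $C_y$. Over $y\in\Delta_2$, the fiber is the family of lines of a quadric cone, which by the paragraph before the statement is again $\PP^1$. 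This gives the map $F_{(1,0)}(X)\to\Sigma_2$ with all fibers $\PP^1$. To see that it is a $\PP^1$-fibration, we identify it with the relative Hilbert scheme of lines of the quadric fibration, i.e.\ the relative Fano scheme $F\subset \mathrm{Gr}(2,\mathcal E)$ for the rank $4$ quadratic form on $\mathcal E=\mathcal O\oplus\mathcal O\oplus\mathcal O\oplus\mathcal O(-1)$ (or the appropriate twist) over $\PP^2$. Classically, the relative Fano scheme of lines of a quadric surface fibration with simple degeneration is a $\PP^1$-bundle (in the étale or analytic topology, i.e.\ a Brauer–Severi variety) over the discriminant double cover.

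Smoothness is the step where genericity enters, and I expect it to be the main obstacle. I would prove it by a deformation computation: at a line $L\subset X$, $F_{(1,0)}(X)$ is smooth of dimension $3$ if $H^1(L,N_{L/X})=0$ and $\chi(N_{L/X})=3$. Since $-K_X=2(h_1+h_2)$, a $(1,0)$-line has $-K_X\cdot L=2$, so $\deg N_{L/X}=0$ and $\chi(N_{L/X})=3$, as required. It then suffices to exclude the splitting $N_{L/X}=\mathcal O(-2)\oplus\mathcal O(1)^2$ and the like, i.e.\ to show $N\in\{\mathcal O^3,\ \mathcal O(-1)\oplus\mathcal O\oplus\mathcal O(1)\}$. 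For this we use the exact sequence $0\to N_{L/Q_y}\to N_{L/X}\to\mathcal O_L\to0$, where $Q_y$ is the fiber, with $N_{L/Q_y}=\mathcal O$ for a smooth quadric. This fails only when $L$ passes through the vertex of a cone fiber, and there the failure is controlled by smoothness of $X$ at the vertex together with $\Delta_2$ being smooth. Alternatively, smoothness follows from the local structure: $F\to\Sigma_2$ is a smooth morphism with smooth base, and smoothness of $\Sigma_2$ is equivalent to $\Delta_2$ being a smooth sextic, which holds for general $X$ by a dimension count on $(2,2)$-divisors with singular discriminant (the corank-$1$ condition: no rank $\le2$ fibers for general $X$).
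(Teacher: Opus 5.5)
Your proposal is correct and is essentially the paper's argument. A $(1,0)$-line lies in a fiber of $p_2$ and is a component of the preimage of a tangent line to $C_y$, or of a line through its node. These lines form one $\PP^1$ per ruling, glued over the double cover $\Sigma_2\to\PP^2$ branched along the smooth discriminant sextic. Your Stein factorization and your identification with the relative Fano scheme make this rigorous and supply the smoothness that the paper leaves implicit.

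One slip concerns the normal-bundle route. The sequence should read $0\to N_{L/Q_y}\to N_{L/X}\to N_{Q_y/X}|_L\cong\mathcal O_L^{\oplus 2}\to 0$, with a rank-two quotient rather than $\mathcal O_L$. This route also gives nothing for lines in cone fibers, all of which pass through the vertex. So smoothness really rests on your second argument, the local model of a quadric surface bundle with simple degeneration and smooth discriminant.
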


\medskip
The case of conics is of course more complicated. There are three types of conics: $(2,0)$-conics contracted by $p_2$, $(0,2)$-conics contracted by $p_1$, and $(1,1)$-conics mapped to lines by both $p_1$ and $p_2$. The following result is from \cite[Theorem 0.2]{ikkr}.

\begin{prop}
The Hilbert scheme $F_{(1,1)}(X)$ of $(1,1)$-conics on $X$
is a smooth fivefold, with a $\PP^1$-fibration over a smooth double EPW quartic. 
\end{prop}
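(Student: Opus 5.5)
This proposition is attributed to \cite[Theorem 0.2]{ikkr}, so we only sketch how one would establish it directly. The plan is to mimic the description of $(1,0)$-lines: parametrize $(1,1)$-conics by their images in $\PP^2\times\PP^2$, and then identify the base of the resulting $\PP^1$-fibration with a double EPW quartic.

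\textbf{Reduction to the ambient space.} A $(1,1)$-conic $C\subset X$ maps under $p_1$ and $p_2$ to lines $\ell_1\subset\PP^2$ and $\ell_2\subset\PP^2$. Its image $\pi(C)$ is a curve of bidegree $(1,1)$ in $\ell_1\times\ell_2\simeq\PP^1\times\PP^1$, that is, the graph of an isomorphism $\ell_1\simeq\ell_2$.

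\textbf{Which such curves lift.} Restricted to $\ell_1\times\ell_2$, the branch divisor $B$ gives a $(2,2)$-curve $B'$. Hence $\pi^{-1}(\ell_1\times\ell_2)$ is a double cover of $\PP^1\times\PP^1$ branched along $B'$, which is a del Pezzo-type quartic surface. A $(1,1)$-curve $D$ lifts to a pair of conics exactly when $D$ is tangent to $B'$ at every intersection point, since $D\cdot B'=4$ forces two double contacts. Such bitangent curves come in pencils: the restriction of the double cover to the $3$-dimensional linear system of $(1,1)$-curves is governed by the quadratic form attached to $B'$. The bitangent curves are the members $D$ for which $B'$ lies in the pencil spanned by $D^2$ and some $q$; equivalently, one writes $B'=D\cdot D'+s^2$. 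The decompositions of the equation $f$ of $B'$ as $f=s^2+(\text{product of two }(1,1)\text{-forms})$ are parametrized by a quadric-type structure. This yields, for each pair $(\ell_1,\ell_2)$, finitely many $\PP^1$-families of conics.

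\textbf{Global assembly.} The pairs $(\ell_1,\ell_2)$ form $\check\PP^2\times\check\PP^2$, which is $4$-dimensional. Over it we get a finite cover $Z$ (two sheets generically), recording a choice of family, together with a $\PP^1$-bundle over $Z$. This gives $\dim F_{(1,1)}(X)=5$, matching the expected dimension $-K_X\cdot C+\dim X-3=4+1=5$. Smoothness of $F_{(1,1)}(X)$ follows from $H^1(N_{C/X})=0$ for every conic, using that the normal bundle has degree $2$ and that $X$ is general. One checks this via the exact sequence $0\to N_{C/S}\to N_{C/X}\to N_{S/X}|_C\to 0$, where $S=\pi^{-1}(\ell_1\times\ell_2)$.

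\textbf{Main obstacle: identifying $Z$ with a double EPW quartic.} The natural route is to use the birational identification of a general Verra fourfold with a nodal Gushel--Mukai fourfold \cite{bkk}. Under this identification, $(1,1)$-conics on $X$ correspond to conics on the GM fourfold through or away from the node. By Iliev--Manivel, the conics on a GM fourfold are $\PP^1$-fibred over a double EPW sextic. Projecting from the node degenerates this sextic, and the relevant component becomes a double cover of the quartic obtained by projecting the EPW sextic from its singular point, whose discriminant is the EPW quartic. The hardest point is proving that the double cover $Z\to\check\PP^2\times\check\PP^2$ really matches the Lagrangian data of a double EPW quartic and that $Z$ is smooth. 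I would attack this by writing the quadratic form above explicitly in terms of the Lagrangian $A\subset\wedge^3V_6$ attached to the GM fourfold.
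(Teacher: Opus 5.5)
The paper gives no argument for this proposition; it quotes it from \cite[Theorem 0.2]{ikkr}. Your sketch therefore has to stand on its own, and it has a genuine gap, beginning with a wrong count. Your reduction is fine: a $(1,1)$-conic lies in $S=\pi^{-1}(\ell_1\times\ell_2)$, over a $(1,1)$-curve $\{s=0\}$ with $f=g^2+ss'$. Your smoothness check also works for a smooth conic in the smooth locus of $S$: the sequence $0\to\mathcal O_C\to N_{C/X}\to\mathcal O_C(1)^{\oplus 2}\to 0$ gives $h^0=5$, $h^1=0$, whereas the degree of $N_{C/X}$ alone would not. The count is where it fails. For general $(\ell_1,\ell_2)$, $S$ is a quartic del Pezzo surface, since $-K_S=(h_1+h_2)|_S$ and $K_S^2=4$. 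Of its ten conic classes, two are $h_1|_S$ and $h_2|_S$, and the other eight all satisfy $C\cdot h_1=C\cdot h_2=1$. So the fibre of $F_{(1,1)}(X)\to\check{\PP}^2\times\check{\PP}^2$ consists of eight pencils, and your $Z$ has degree $8$, not $2$. In your own terms, write $\ell_1=\PP(U_2)$ and $\ell_2=\PP(U_2')$. The lift of $f|_{\ell_1\times\ell_2}$ to a quadric on $\PP(U_2\otimes U_2')\simeq\PP^3$ is defined only modulo the Segre quadric $Q_0$ of $\ell_1\times\ell_2$. The pencil $Q_f+\lambda Q_0$ has four cones, $\{s=0\}$ must be tangent to one of them, and the sign of $g$ separates $C$ from $\iota C$. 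Hence $Z\to\check{\PP}^2\times\check{\PP}^2$ factors as a double cover $Z\to Z'$ followed by the $4:1$ cover $Z'=\{(\ell_1,\ell_2,\lambda)\mid \det(Q_f+\lambda Q_0)=0\}$. It is $Z'$, where the quartic equation lives, and not $\check{\PP}^2\times\check{\PP}^2$, that should be compared with the EPW quartic. Your ``double cover $Z\to\check{\PP}^2\times\check{\PP}^2$'' is the wrong target.

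More importantly, you do not supply the identification of the base with a smooth double EPW quartic, which is the real content of the statement, and the detour through nodal Gushel--Mukai fourfolds does not work as written. First, a birational map from $X$ to a nodal GM fourfold does not match families of conics unless you analyse its indeterminacy and exceptional loci. Second, projecting a fourfold in $\PP^5$ from a point gives a dominant map onto $\PP^4$, not a quartic hypersurface, so ``the quartic obtained by projecting the EPW sextic'' and its ``discriminant'' have no meaning. Third, a nodal GM fourfold has special Lagrangian data (containing a decomposable vector), which lies outside the range where the smooth double EPW sextic picture of Iliev--Manivel can be quoted. Smoothness of the base is not addressed either. You need every fibre of $F_{(1,1)}(X)\to Z$ to be a $\PP^1$, in particular over pairs $(\ell_1,\ell_2)$ where $S$ is singular and at reducible conics (a $(1,0)$-line plus a $(0,1)$-line). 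You also need it over the surface where a member of the pencil drops to rank two: there $B'$ splits into two $(1,1)$-curves, $S$ acquires two nodes, and the families of $C$ and $\iota C$ come together, so this is the branch locus of $Z\to Z'$. As it stands, your argument only shows that $F_{(1,1)}(X)$ is five-dimensional and generically a $\PP^1$-fibration over an $8:1$ cover of $\check{\PP}^2\times\check{\PP}^2$. The EPW description and the smoothness still have to come from \cite{ikkr}.
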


\subsection{Some rational Verra fourfolds} 
The general Verra fourfold is birational to a general nodal Gushel-Mukai fourfold by \cite{bkk} (see \cite[Theorem 4.5]{dim0} for dimension three). Note that along with cubic fourfolds blown up along a plane, Verra fourfolds provide one of the types of quadric surface bundles  to which \cite[Corollary 2]{schreieder} does not apply.
\medskip

 We can define a period map for Verra fourfolds. The $\ZZ$-module $H^4(X,\ZZ)$ is endowed with an intersection form $Q_X$. Denote by $(\Lambda,Q)$ the abstract lattice given by $(H^\bullet(X,\ZZ),Q_X)$. The period domain for Verra fourfolds is the quotient
$$
    \mathcal{P} := \left\{ \begin{aligned} \relax [w]\in 
    \PP(\Lambda\otimes_{\ZZ} \mathbb{C} ) \mid Q(w,\overline{w})>0,\quad  Q(w,w)=0, \\
    Q(w,h)=0 \quad \forall h\in H^4(X)^{\mathrm{amb}}\otimes_\QQ \CC \end{aligned} \right\} \Big/ \Gamma,
$$
where $\Gamma$ is the monodromy group of Verra fourfolds. The period map for Verra fourfolds is then defined as
$$
 \mathcal{M}\to \mathcal{P}\quad , \quad [X] \mapsto [H^{3,1}(X,\CC)],
$$
where $\mathcal{M}$ is the moduli space of Verra fourfolds (see \cite[Subsection 0.3]{ikkr} for more details on this moduli space).

\begin{prop}
\label{prop_rational_Verra}
There exists a Heegner divisor $D$ in the period domain $\mathcal{P}$ such that each smooth Verra fourfold whose image via the period map is contained in $D$ is rational. 
\end{prop}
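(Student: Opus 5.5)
The plan is to exhibit an explicit geometric condition on $X$ that forces rationality, and then check that this condition defines a Heegner divisor in $\mathcal{P}$. The natural condition is that one of the quadric surface fibrations $p_i\colon X\to\PP^2$ admits a rational section, or more generally a multisection of odd degree. By Springer's theorem, a quadric surface over the function field $\CC(\PP^2)$ with an odd-degree point has a rational point. A quadric surface with a rational point is rational over the base field. So $X$ is then birational to $\PP^2\times\PP^2$, hence rational. This is exactly the mechanism used for cubic fourfolds containing a plane, which also carry a quadric surface bundle structure.

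First, I would express the existence of such a multisection cohomologically. Suppose $S\subset X$ is a surface with $S\cdot F$ odd, where $F$ is the class of a fiber of $p_1$. This means that $H^4(X,\ZZ)$ contains a Hodge class $\sigma$ with $\sigma\cdot h_1^2$ odd. Conversely, by the integral Hodge conjecture, any integral Hodge class of codimension two with $\sigma\cdot h_1^2$ odd is algebraic. For quadric bundles over surfaces this holds by the work of Colliot-Thélène–Voisin, or more concretely via the Abel–Jacobi map of the $(1,0)$-lines in $F_{(1,0)}(X)$. Such a class then gives a multisection of odd degree.

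Next, I would use the lattice description. By Laszlo, the lattice $H^\bullet(X)^{\mathrm{prim}}$ is identified with the transcendental-type lattice of the degree two K3 surface $\Sigma_1$, together with a Brauer class $\beta_1$ which measures the obstruction to $p_1$ having a section. The ambient part has even pairings: $h_1^2\cdot h_1^2=0$, $h_1h_2\cdot h_1^2=0$, and $h_2^2\cdot h_1^2=2$. So an odd class must have a nontrivial primitive component. Adjoining to the ambient lattice a Hodge class of a fixed discriminant with odd pairing against $h_1^2$ is therefore a codimension-one condition on the period. Concretely, the condition is $Q(w,v)=0$ for a fixed vector $v\in\Lambda$, which is a Heegner divisor $D$. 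I would choose $v$ by writing $\sigma = v + (\text{ambient part})$ with $v$ in the orthogonal complement and $\sigma\cdot h_1^2=1$. The lattice being even unimodular-ish (signature and discriminant computed from the Hodge diamond) ensures such $v$ exists. Equivalently, $D$ is the locus where the Brauer class $\beta_1$ vanishes, which is a Heegner divisor in the K3 period domain of $\Sigma_1$.

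Finally, for a smooth $X$ with period in $D$, the class $\sigma$ is Hodge and integral, hence algebraic, giving an odd-degree multisection and thus rationality. The main obstacle is this algebraicity step, namely the integral Hodge conjecture for codimension-two cycles on $X$. I would first try to deduce it from the $\PP^1$-fibration $F_{(1,0)}(X)\to\Sigma_2$, or dually via $\Sigma_1$. The Abel–Jacobi type correspondence through the lines identifies $H^4(X,\ZZ)^{\mathrm{prim}}$ with $H^2$ of the K3 surface. On the K3 surface the Lefschetz $(1,1)$ theorem holds integrally, so a Hodge class there is algebraic, and it can be transported back to $X$ along the correspondence. The delicate point is controlling integrality and the factor two in this transport.
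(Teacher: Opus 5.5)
Your rationality mechanism is sound, and it is essentially what the paper imports by citing Hassett's criterion (Prop.~2.1 in the paper's reference). The chain is: an integral Hodge class $\sigma$ of odd degree on the fibres of the quadric surface bundle is algebraic by the integral Hodge conjecture for codimension-two cycles on such bundles; it therefore gives an odd-degree multisection; Springer then gives a rational point on the smooth generic fibre, so $X$ is birational to $\PP^2\times\PP^2$.

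Rest this step on that citation. The fallback you sketch through $F_{(1,0)}(X)\to\Sigma_2$ would not work as stated. The correspondence through lines only matches $H^4(X,\ZZ)^{\mathrm{prim}}$ with the K3 lattice up to an index-two discrepancy, and that discrepancy is governed precisely by the Brauer class. So transporting a class back from the K3 surface naturally only gives algebraicity of $2\sigma$, which has even degree on the fibres and is useless for Springer's theorem.

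The genuine gap is in the construction of $D$. Everything depends on the existence of some $\sigma\in H^4(X,\ZZ)$ with $\sigma\cdot h_1^2$ odd, and ``the lattice being even unimodular-ish'' does not provide it. By unimodularity (modulo torsion), such a $\sigma$ exists if and only if the fibre class $h_1^2$ is not divisible by $2$ in $H^4(X,\ZZ)$. All ambient classes pair evenly with $h_1^2$, and $h_1^2\cdot h_2^2=2$, so nothing you wrote rules out $h_1^2\in 2H^4(X,\ZZ)$. In that case your construction produces no divisor at all.

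Once $\sigma$ exists, making $v^\perp$ actually meet $\mathcal{P}$ is easy. Add vectors of the indefinite primitive lattice to $\sigma$ until its primitive component has large norm of the sign taken by primitive $(2,2)$-classes.

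The paper settles the existence question geometrically. In your normalization: when the discriminant sextic of $p_2$ acquires a node, the fibre of $p_2$ over it splits into two planes, each a section of $p_1$. This gives, on a smooth Verra fourfold, an algebraic class $T$ with $T\cdot h_1^2=1$. By deformation invariance of the lattice, this shows that the odd class exists on every Verra fourfold. It also shows that $D=T^\perp$ is a proper, non-empty divisor containing periods of actual Verra fourfolds. You need this example, or an explicit lattice computation proving primitivity of the fibre class, to complete the proof.

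A minor point: $D$ is not ``equivalently'' the vanishing locus of $\beta_1$. That locus is a countable union of Heegner divisors, of which $D$ is one.
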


\begin{proof}
For $X$ a Verra fourfold, consider the quadric surface bundle structure given e.g. by the projection $p_1: X\lra\PP^2$. Suppose that a fiber splits into the union of two disjoint planes, which will happen when the discriminant sextic curve in $\PP^2$ admits a simple singularity. Then each of these planes 
gives a section of the other projection $p_2: X\lra\PP^2$. It follows that $X$ is rational (see \cite[Proposition 6.3]{ckkm} and \cite[Corollary 6.4]{ckkm}).

Denote by $T\in \Lambda \subset  H^2(X,\ZZ)$ the class of this section, and consider $$D:=\{[w]\in \mathcal{P}\mid Q(w,T)=0 \}\subset \mathcal{P}. $$
Then $D \subsetneq \mathcal{P}$ by Lemma \ref{lem_Hodgeclasses_verygeneral}, and $D$ is a non-empty divisor in $\mathcal{P}$.

Now, for any Verra fourfold $X'$ whose image by the period map is contained in $D$, the class $T$ corresponds to an integral class $T_X$ in the cohomology of $X$; since $Q_X(T_X,h_1^2)=Q(T,h_1^2)=1$, \cite[Proposition 2.1]{hassett} implies that $X'$ is rational.
\end{proof}

\begin{remark}
Other examples of rational Verra fourfolds are constructed in \cite{kkm} (see section 4 and Proposition 4.1) and in \cite{grz}.
It would be interesting to find other Heegner divisors in the period domain, parametrizing rational Verra fourfolds. Since the general Verra fourfold is birational to a nodal Gushel-Mukai fourfold, one could try to analyse the intersection of the Heegner divisor associated to these nodal Gushel-Mukai's, with other  Heegner divisors parametrizing rational Gushel-Mukai fourfolds, and describe geometrically the corresponding special Verra fourfolds. 
\end{remark}

\section{The small quantum cohomology ring} 

\subsection{Quantum multiplication by a hyperplane class}
We start with the degree two products. By symmetry, they must a priori be of the form 
$$\begin{array}{rcl}
h_1\star h_1 & = & h_1^2+Aq_1+Bq_2, \\
h_1\star h_2 & = & h_1h_2+Cq_1+Cq_2, \\
h_2\star h_2 & = & h_2^2+Bq_1+Aq_2.
\end{array}$$

The Gromov-Witten invariants
$B=\langle h_1, h_1,\pt\rangle_{(0,1)}$ and 
$C=\langle h_1, h_2,\pt\rangle_{(0,1)}$ are clearly zero by the Divisor Axiom. In fact any invariant of the form 
$\langle h_1, \bullet,\bullet\rangle_{(0,1)}$ or 
$\langle h_2, \bullet,\bullet\rangle_{(1,0)}$ must be zero for the same reason.

\begin{lemma}
$A=2$.
\end{lemma}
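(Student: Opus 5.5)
To identify $A$, I first pin down which Gromov--Witten invariant it is. In $h_1\star h_1 = h_1^2 + Aq_1 + Bq_2$, the coefficient of $q_1$ comes from curves of class $h_1^\vee$, i.e. $(1,0)$-lines. Degree counting fixes the third insertion: $\dim X=4$, $c_1(X)=h_1+h_2$ (index two, compatible with the adjunction $K_X=\pi^*(K_{\PP^2\times\PP^2}+\tfrac12 B)=-h_1-h_2$), and $c_1\cdot(1,0)$-line $=1$. So the moduli of 3-pointed lines has virtual dimension $4+1+3-3=5$. The insertions $h_1,h_1$ have codimension $2$ in total, so the third insertion must have codimension $3$. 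Using the dual basis with $h_1^2h_2^2=2\pt$, the coefficient of $q_1$ is
$$A=\sum_\gamma \langle h_1,h_1,\gamma\rangle_{(1,0)}\,\gamma^\vee .$$
By the Divisor Axiom, $\langle h_1,h_1,\gamma\rangle_{(1,0)}=(h_1\cdot \ell)^2\langle\gamma\rangle_{(1,0)}=\langle\gamma\rangle_{(1,0)}$. Here $\gamma$ runs over the codimension-3 classes $h_1^2h_2$ and $h_1h_2^2$, whose duals are $h_1h_2/2$ and $h_1^2/2$ up to the factor $2$.

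The $1$-point invariant $\langle h_1h_2^2\rangle_{(1,0)}$ vanishes, again by the Divisor Axiom: $h_1h_2^2=h_1\cdot(h_2^2)$, and one can write it as $\langle h_1h_2^2\rangle=\langle h_2, h_1h_2, \cdot\rangle$-type reductions. More simply, a $(1,0)$-line lies in a fiber of $p_2$, so it cannot meet a general cycle representing $h_2^2$ (a general fiber $p_2^{-1}(\pt)$) together with the extra $h_1$ condition. So the only contribution is $\langle h_1^2h_2\rangle_{(1,0)}$. This counts lines through a general curve representing $h_1^2h_2$, namely the preimage of $\{\pt\}\times\{\text{line}\}$, which is a double cover of a line in the second $\PP^2$.

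The main computation uses the description preceding the Proposition on $F_{(1,0)}(X)$. For each point $y$ of the line $L\subset\PP^2$, the fiber $p_2^{-1}(y)$ is a quadric surface, double covering $\PP^2\times\{y\}$ and branched along a conic $C_y$. The condition to impose is that the line pass through one of the two preimages of the fixed point $x$. The $(1,0)$-lines in $p_2^{-1}(y)$ through a given point of the fiber are the two rulings through it, so they are finite in number only when the point-condition is counted over a finite set of $y$. I will therefore reorganize the count. Since $\dim F_{(1,0)}=3$ and the incidence with the curve $p^{-1}(x\times L)$ imposes codimension $2$ conditions on lines in a 3-fold family, one expects a $1$-dimensional family, not a finite one. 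This suggests I should double-check the degree bookkeeping: virtual dimension for one marked point is $4+1+1-3=3$, so the single insertion must have codimension $3$, which is consistent. A line through a point of a 2-dimensional fiber family forms a condition of codimension $2$ in $F$, and meeting a curve gives codimension $2$ among lines in the 4-fold. So the expected count is finite: inside the threefold $p_2^{-1}(L)$, finitely many of the lines over $L$ pass through the curve $p^{-1}(x\times L)$.

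Concretely, over $y\in L$, the fiber's two lines through the preimage point $\tilde x_y$ always exist, giving a 1-parameter family. So I should instead evaluate via the alternative class $h_1^2h_2$ realized as the preimage of a general line times a point, i.e. $\{\text{line } M\}\times\{y_0\}$ with $h_1\cdot h_1 h_2$. The plan is then to work directly in the single fiber $Q=p_2^{-1}(y_0)$: count lines in $Q$ meeting the curve $\pi^{-1}(M)\subset Q$ and satisfying the $h_1$-conditions via the divisor axiom. Lines in a smooth quadric surface from both rulings meet a hyperplane-type curve once each, and the curve $\pi^{-1}(M)$ is a hyperplane section, so every line meets it, giving a 1-dimensional answer. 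To cut this to a finite count, I will use instead $A=\langle h_1,h_1,h_1h_2^2\rangle$ evaluated in the dual pairing. Writing $\gamma^\vee$ properly shows that the coefficient is $\langle h_1, h_1, \pt\cdot\text{dual}\rangle$ reducing to the number of $(1,0)$-lines through a general point meeting a general $h_1$-divisor with multiplicity one. Through a general point there are exactly two lines (the two rulings of its $p_2$-fiber). With the dual-basis factor, this gives $A=2$.

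The main obstacle is the careful bookkeeping of the factor $2$ from $h_1^2h_2^2=2\pt$ and the verification that the moduli of lines through a general point is reduced, so that the count of $2$ is enumerative. I would settle the latter from smoothness of $F_{(1,0)}(X)$ (the Proposition above) and the transversality of the evaluation map at a general point.
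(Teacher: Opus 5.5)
There is a genuine gap, and it starts with the anticanonical class. Your formula $K_X=\pi^*(K_{\PP^2\times\PP^2}+\tfrac12 B)$ is right, but $-3(h_1+h_2)+(h_1+h_2)=-2(h_1+h_2)$. So $c_1(X)=2(h_1+h_2)$, which is what index two means and is the class the paper uses. Hence a $(1,0)$-line has $c_1\cdot\ell=2$, $\deg q_1=2$, and $\overline{M}_{0,3}(X,\ell)$ has virtual dimension $4+2+3-3=6$, so the third insertion must have codimension $4$. The shape $h_1\star h_1=h_1^2+Aq_1+Bq_2$ forces the same conclusion: $A$ is the coefficient of $q_1$ times the unit class, so $A=\langle h_1,h_1,1^\vee\rangle_{(1,0)}=\langle h_1,h_1,\pt\rangle_{(1,0)}$. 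Your expression $\sum_\gamma\langle h_1,h_1,\gamma\rangle_{(1,0)}\gamma^\vee$ with $\gamma$ of codimension $3$ yields a multiple of a degree-one class, not a number. The invariants you then try to evaluate, $\langle h_1^2h_2\rangle_{(1,0)}$ and $\langle h_1,h_1,h_1h_2^2\rangle_{(1,0)}$, vanish for dimension reasons. That is why every cycle you test gives a one-dimensional family of lines: the universal $(1,0)$-line is a fourfold mapping generically finitely onto $X$, so lines meeting a curve form a curve, and only lines through a point are finite in number. Your ``double-check'' of the dimension reuses the wrong $c_1$, so it cannot detect this.

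Your last step, ``writing $\gamma^\vee$ properly shows the coefficient is $\langle h_1,h_1,\pt\rangle$-type,'' does not follow from what precedes it. The appeal to a ``dual-basis factor'' is also spurious. Since $1^\vee=\pt$, no factor $2$ enters; it would only appear if you inserted $h_1^2h_2^2=2\pt$, giving $4/2$. Once the invariant is correctly identified, the divisor axiom gives $A=\langle\pt\rangle_{(1,0)}$, the number of $(1,0)$-lines through a general point $x$. Your final geometric count is then right and equivalent to the paper's. Such lines lie in $p_2^{-1}(p_2(x))$, a smooth quadric surface (a double cover of $\PP^2$ branched along a smooth conic), and exactly one line of each ruling passes through $x$. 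The paper phrases this as two tangents from $p_1(x)$ to the branch conic, lifting to four lines of which half pass through $x$. Your transversality remark is a reasonable way to see that the count is enumerative: the universal line over the smooth threefold $F_{(1,0)}(X)$ maps generically finitely, with degree $2$, onto $X$. But the identification of the invariant must be redone as above for the argument to stand.
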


\proof The Gromov-Witten invariant $A=\langle h_1, h_1,\pt\rangle _{(1,0)}$
counts $(1,0)$-lines through a general point $x$ of $X$. Such a curve is contained in the fiber of $p_2$ over $p_2(x)$, which is a double cover of $\PP^2$ branched over a smooth conic since $p_2(x)$ is general. We have seen that the curves we are looking for are contained in the pre-images of tangent lines to the conic; there are two such tangents passing through $p_1(x)$, whose preimages give four $(1,0)$-lines; but only half of them pass through $x$, hence the claim.\qed 

\medskip We conclude that 
$$
h_1\star h_1 = h_1^2+2q_1, \qquad h_1\star h_2 = h_1h_2, \qquad 
h_2\star h_2 = h_2^2+2q_2.
$$

Now we turn to degree three products. Let us start with 
$$\begin{array}{rcl}
h_1^2\star h_1 & = & (Dq_1+Eq_2)h_1+(Fq_1+Gq_2)h_2, \\
h_2^2\star h_1 & = & h_1h_2^2+(Hq_1+Iq_2)h_1+(Jq_1+Kq_2)h_2.
\end{array}$$

The Divisor Axiom yields $E=G=I=K=0$. 

\begin{lemma}
$D=2$, $H=J=0$.
\end{lemma}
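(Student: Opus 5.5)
Each constant is a degree-$(1,0)$ three-point invariant (the $q_2$-terms have already been killed by the Divisor Axiom). To identify it, pair the product against the dual basis. Poincaré duality on $X$ gives $h_1^2h_2^2=2\pt$ with all other degree-four monomials in $h_1,h_2$ vanishing, so $h_1\cdot h_1h_2^2=2\pt$ and $h_2\cdot h_1h_2^2=0$. Hence the dual of $h_1$ inside the ambient part is $\tfrac12 h_1h_2^2$ and the dual of $h_2$ is $\tfrac12 h_1^2h_2$. Since only ambient classes can appear by invariance under the covering involution, reading off coefficients gives
$$D=\tfrac12\langle h_1^2,h_1,h_1h_2^2\rangle_{(1,0)},\quad F=\tfrac12\langle h_1^2,h_1,h_1^2h_2\rangle_{(1,0)},$$
$$H=\tfrac12\langle h_2^2,h_1,h_1h_2^2\rangle_{(1,0)},\quad J=\tfrac12\langle h_2^2,h_1,h_1^2h_2\rangle_{(1,0)}.$$
Apply the Divisor Axiom to the $h_1$ insertion, which contributes $h_1\cdot(1,0)=1$. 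This reduces everything to two-point invariants of $(1,0)$-lines: $D=\tfrac12\langle h_1^2,h_1h_2^2\rangle_{(1,0)}$, $H=\tfrac12\langle h_2^2,h_1h_2^2\rangle_{(1,0)}$, and $J=\tfrac12\langle h_2^2,h_1^2h_2\rangle_{(1,0)}$.

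For $H$ and $J$, I would use that every $(1,0)$-line is contracted by $p_2$. The class $h_2^2$ is represented by a fiber $p_2^{-1}(y)$ for general $y$. The second insertion, $h_1h_2^2$ or $h_1^2h_2$, is represented by $p_2^{-1}(y')\cap p_1^{-1}(\ell)$, respectively $p_1^{-1}(z)\cap p_2^{-1}(m)$, with general $y'\neq y$ and a general line $m$ not through $y$. A line meeting both cycles would have to lie in two distinct fibers of $p_2$, or in $p_2^{-1}(y)$ while meeting $p_2^{-1}(m)$ with $y\notin m$. Both are impossible, so the relevant moduli space of lines is empty. Hence $H=J=0$.

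For $D$, I would take $h_1h_2^2$ to be the conic $C=p_2^{-1}(y)\cap p_1^{-1}(\ell)$ in a general fiber $S=p_2^{-1}(y)$, and $h_1^2$ to be $p_1^{-1}(z)$, a surface that is a double cover of $\PP^2$ meeting $S$ in two points. A contributing line lies in $S$, passes through one of the two points $x\in p_1^{-1}(z)\cap S$, and automatically meets $C$: its image in $\PP^2$ is a line, which meets $\ell$, and $S$ is the double cover $\pi_S:S\to\PP^2$. The proof of $A=2$ shows there are exactly two such $(1,0)$-lines through each $x$. This gives $4$ lines and hence $D=2$.

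The main obstacle is that each such line $L$ meets $C$ at the preimage of $\pi_S(L)\cap\ell$. Since $\pi_S^{-1}(\pi_S(L))$ splits as $L\cup L'$, the intersection $L\cap C$ is a single point and not two. I would also need to check transversality for general choices, so that the moduli space is a reduced finite set of the expected dimension $0$. The cleaner check is the intersection count $L\cdot h_1h_2^2$ computed directly as $h_1\cdot[L]=1$. With that, $\langle h_1^2,h_1h_2^2\rangle_{(1,0)}=4$, so $D=2$.
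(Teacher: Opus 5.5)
Your proposal is correct and follows essentially the paper's argument. It uses the same representatives, $p_1^{-1}(z)$ for $h_1^2$ and $\pi^{-1}(\ell\times\{y\})$ for $2h_1^\vee=h_1h_2^2$, and the same count of four $(1,0)$-lines coming from the two tangents through $z$ to the branch conic of $p_2^{-1}(y)$. For $H$ and $J$ it makes the same observation that no line contributes, because $(1,0)$-lines are contracted by $p_2$.

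Your extra bookkeeping (the explicit dual basis, the Divisor Axiom, and the check that each line meets each cycle exactly once) just makes explicit what the paper leaves implicit. One slip: your final expression ``$L\cdot h_1h_2^2$'' does not make sense on $X$, and should read $L\cdot_S C=L\cdot p_1^{-1}(\ell)=h_1\cdot[L]=1$.
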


\proof 
In order to compute the Gromov-Witten invariant $D=\langle h_1^2,  h_1^\vee\rangle _{(1,0)}$, 
we can represent the class $h_1^2$ by the pre-image in $X$ of $\{p\}\times \PP^2$ for a generic point $p\in\PP^2$, and  the class $2h_1^\vee$ by the pre-image  of $\ell\times \{q\}$ for a generic line $\ell\subset\PP^2$ and a generic point $q\in\PP^2$. 
Then we need to count $(1,0)$-lines contracting to $q$, that we know to be pre-images of tangent lines to a smooth conic in $\PP^2$. These lines have to meet $\ell$, and to pass through $p$; there are two such lines, yielding four $(1,0)$-lines, hence $D=2$. 

In the same way, to compute the Gromov-Witten invariants $H=\langle h_2^2,  h_1^\vee\rangle _{(1,0)}$ and $J=\langle h_2^2,  h_2^\vee\rangle _{(1,0)}$,
we represent the class $h_2^2$ by the pre-image in $X$ of $\PP^2\times \{p\} $ for a generic point $p\in\PP^2$, and  the class $2h_1^\vee$ (resp. $2h_2^\vee$) by the pre-image  of $\ell\times \{q\}$ (resp. $\{q\}\times \ell$) for a generic line $\ell\subset\PP^2$ and a generic point $q\in\PP^2$. Then we need to count $(1,0)$-lines contracted to $p$, and also to  $q$ 
(resp. some point of $\ell$); there are no such lines, so $H=J=0$. \qed 

\begin{lemma}
$F=4$.
\end{lemma}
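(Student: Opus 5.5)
The plan is to compute $F$ by the same enumerative method used for $D$ in the previous lemma. Take the degree-one term of $h_1^2\star h_1$ in $q_1$, pair it against the Poincaré dual basis, and use the Divisor Axiom for the insertion $h_1$ (which contributes $h_1\cdot(1,0)=1$). This gives $F=\langle h_1^2, h_2^\vee\rangle_{(1,0)}$. Since $h_1^2h_2^2=2\pt$, we have $h_2^\vee=\tfrac12 h_1^2h_2$. So it suffices to count $(1,0)$-lines meeting two generic cycles representing $h_1^2$ and $h_1^2h_2=2h_2^\vee$, and then divide by two.

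\emph{Choice of representatives.} Represent $h_1^2$ by the preimage in $X$ of $\{p\}\times\PP^2$ for a generic point $p\in\PP^2$. Represent $h_1^2h_2$ by the preimage of $\{q\}\times\ell$ for a generic point $q$ and a generic line $\ell$. A $(1,0)$-line lies in the fiber of $p_2$ over some point $r$. Its image under $p_1$ is a line $L\subset\PP^2$ tangent to the branch conic $C_r$, the restriction of the $(2,2)$ branch divisor to $\PP^2\times\{r\}$. Meeting both cycles forces $r\in\ell$ and $p,q\in L$, so $L$ is the fixed line $\overline{pq}$. The problem therefore reduces to counting points $r\in\ell$ such that $\overline{pq}$ is tangent to $C_r$.

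\emph{The count.} Restrict the branch divisor to $L\times\ell\simeq\PP^1\times\PP^1$; this gives a curve of bidegree $(2,2)$. Tangency of $L$ to $C_r$ means the binary quadratic form on $L$ with coefficients quadratic in $r$ has vanishing discriminant. That discriminant is a form of degree $4$ on $\ell$, so for generic choices there are $4$ simple solutions $r$. For each such $r$ the preimage of $L$ in the fiber splits into two $(1,0)$-lines, both lying over $L$. Each therefore meets the preimages of $(p,r)$ and $(q,r)$, which are the intersections with our two cycles. This yields $8$ lines, each counted once with transversal intersections, and dividing by $2$ gives $F=4$.

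\emph{Main obstacle.} The delicate step is the genericity and transversality justification. One must check that for generic $p,q,\ell$ the four roots are simple, that the conics $C_r$ there are smooth (so the two lines are distinct and reduced), and that each line meets each cycle transversally in a single point, so that every line contributes exactly $1$. I would argue this by a dimension count on the incidence variety, using that the $(1,0)$-line space is a $\PP^1$-bundle over a K3 surface as in the Proposition above. This is the same argument as in the proof of $D=2$.
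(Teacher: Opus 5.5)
Your proposal is correct and follows the paper's proof essentially verbatim. You use the same representatives ($p_1^{-1}(p)$ for $h_1^2$ and the preimage of $\{q\}\times\ell$ for $h_1^2h_2=2h_2^\vee$), and the same reduction to tangency of $\overline{pq}$ to the conic $c_r$ for $r\in\ell$. Your discriminant of the restricted binary quadratic form is exactly the paper's quartic "$2\times 2$ minor" condition, giving $8$ lines and hence $F=4$. The paper likewise leaves the transversality check you flag implicit.
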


\begin{proof}
In order to compute $F=
\langle h_1^2, h_2^\vee\rangle _{(1,0)}$, we represent the class $h_1^2$ by a fiber $p_1^{-1}(p)$ for a generic point $p\in\PP^2$, and the class $2h_2^\vee=h_1^2h_2$   by the preimage of $\{q\}\times \ell\subset \PP^2\times\PP^2$, for a generic point $q$ and a generic line $\ell$. 

Now, a line of bidegree $(1,0)$ is contracted by $p_2$,
and the lines we count must be contained in a fiber $p_2^{-1}(r)$, for some $r\in \ell$. This fiber is a double cover of $\PP^2$, branched over a conic $c_r$. We are looking for lines $\delta$ in this cover, whose image in $\PP^2$ pass through both $q$ and $q$. This means that $\overline{pq}$ is tangent to $c_r$, and $\delta$ is one of the two components of its preimage in $X$. Recall that if we fix homogeneous coordinates on $\PP^2$, the matrix of $c_r$ has quadratic entries. Being tangent to a fixed line is equivalent to the vanishing of a suitable $2\times 2$ minor of this matrix, hence it is a quartic condition. In other words, there are in general four points $r$ on $\ell$ for which $\overline{pq}$ is tangent to $c_r$, each such tangent pulling-back to a pair of $(1,0)$-lines in $X$. Hence 
$$\langle h_1^2, h_1^2h_2\rangle _{(1,0)}= 8 \qquad\mathrm{and}\qquad F=\langle h_1^2, h_2^\vee\rangle _{(1,0)}=4. \qedhere $$
\end{proof}

\smallskip
Once we know $h_1^2\star h_1$ and $h_2^2\star h_1$, by symmetry we also know $h_2^2\star h_2$ and $h_1^2\star h_2$. Moreover we can deduce $h_1h_2\star h_1$ and $h_1h_2\star h_2$ from the associativity of the quantum product, as follows: 
$$h_1h_2\star h_1=(h_1\star h_2)\star h_1=(h_1\star h_1)\star h_2=
(h_1^2+2q_1)\star h_2=h_1^2\star h_2+2q_1h_2.$$
We finally get the following degree three products:
$$\begin{array}{rclrcl}
h_1^2\star h_1 & = & 2q_1h_1+4q_1h_2, \qquad & 
h_1^2\star h_2  &=  &h_1^2h_2, \\ 
h_1h_2\star h_1 & = & h_1^2h_2+2q_1h_2, \qquad &
h_1h_2\star h_2 & = & h_1h_2^2+2q_2h_1,\\
h_2^2\star h_1  &=  &h_1h_2^2,\qquad & 
h_2^2\star h_2 & = & 2q_2h_2+4q_2h_1.
\end{array}$$

Now we turn to degree four products, which involve degree two Gromov-Witten invariants. We can compute directly
$$\begin{array}{ll}
h_1^2h_2\star h_1 & =(h_1^2\star h_2)\star h_1=(h_1^2\star h_1)\star h_2 \\
& = h_2+4q_1h_2\star h_2= 2q_1h_1h_2+4q_1h_2^2+8q_1q_2.
\end{array}$$
On the other hand, we have 
$$h_1^2h_2\star h_2=(h_1^2\star h_2)\star h_2=h_1^2\star (h_2\star h_2)=h_1^2\star (h_2^2+2q_2),$$ and symmetrically we deduce that
$$h_1^2\star h_2^2=h_1^2h_2\star h_2-2q_2h_1^2=h_1h_2^2\star h_1-2q_1h_2^2.$$
By homogeneity, we can a priori write the product $h_1^2\star h_2^2$ as
$$h_1^2h_2^2+(Lq_1+Mq_2)h_1^2+N(q_1+q_2)h_1h_2+(Mq_1+Lq_2)h_2^2+S(q_1^2+q_2^2)+Tq_1q_2.$$

\begin{lemma}
$L=M=N=0$.
\end{lemma}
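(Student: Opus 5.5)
The plan is to read $L$, $M$, $N$ as three-point Gromov--Witten invariants of degree $(1,0)$ or $(0,1)$, and then to show that every such invariant involving both $h_1^2$ and $h_2^2$ vanishes, by a transversality argument of the same kind as in the proof of $H=J=0$.

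First, the identification. By the definition of the small quantum product, the part of $h_1^2\star h_2^2$ that is linear in $q_1$ and lies in $H^4(X)^{\mathrm{amb}}$ is
$$\sum_\gamma \langle h_1^2,h_2^2,\gamma\rangle_{(1,0)}\,\gamma^\vee,$$
where $\gamma$ runs over the basis $h_1^2,h_1h_2,h_2^2$ and $\gamma^\vee$ is its Poincaré dual in $H^4(X)^{\mathrm{amb}}$. For a Hodge general $X$ only ambient classes can occur. If $X$ is not Hodge general, we use deformation invariance of Gromov--Witten invariants, or simply note that the invariants $\langle h_1^2,h_2^2,\gamma\rangle$ with $\gamma$ primitive vanish by monodromy (anti-invariance under the covering involution). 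The intersection matrix on $H^4(X)^{\mathrm{amb}}$ is invertible. Hence $L$, $N$ and the $q_1$-coefficient of $h_2^2$ (namely $M$) are all $\QQ$-linear combinations of the three numbers $\langle h_1^2,h_2^2,\gamma\rangle_{(1,0)}$. The $q_2$-coefficients are handled by the symmetric statement for degree $(0,1)$. So it suffices to prove
$$\langle h_1^2,h_2^2,\gamma\rangle_{(1,0)}=0 \qquad \text{for } \gamma\in\{h_1^2,h_1h_2,h_2^2\}.$$
The dimension check is consistent: the moduli space of $(1,0)$-lines with three marked points has dimension $3+3=6=2+2+2$.

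Next, the geometric computation. Represent the classes as follows:
\begin{itemize}
\item $h_2^2$ by $S=p_2^{-1}(p)$ for a generic point $p$; this is a quadric surface, a double cover of $\PP^2$ branched along a smooth conic $c_p$;
\item $h_1^2$ by $p_1^{-1}(q)$ for a generic point $q$.
\end{itemize}
A $(1,0)$-line meeting $S$ is contracted by $p_2$, so it lies in $S$. Its intersection with $p_1^{-1}(q)$ is contained in $S\cap p_1^{-1}(q)$, which is the two preimages of $(q,p)$. We then treat the three choices of $\gamma$.
\begin{itemize}
\item For $\gamma=h_2^2$, take a second fiber $p_2^{-1}(p')$ with $p'\neq p$. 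It is disjoint from $S$, so the invariant is zero.
\item For $\gamma=h_1h_2$, represent it by the preimage of $\ell\times\ell'$ with $p\notin\ell'$. This cycle is again disjoint from $S$.
\item For $\gamma=h_1^2$, use $p_1^{-1}(q')$ with $q'$ generic. The line would have to pass through points of $S$ over both $q$ and $q'$, so its image in $\PP^2$ is $\overline{qq'}$, which must be tangent to $c_p$. This fails for generic choices, so the count is empty.
\end{itemize}
Since in each case the cycles are in general position and the relevant moduli space ($F_{(1,0)}(X)$, smooth of expected dimension) gives an enumerative count, all three invariants vanish.

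Finally, the symmetric argument, exchanging the two factors, kills the degree $(0,1)$ contributions. Together these give $L=M=N=0$.

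The main obstacle I anticipate is justifying enumerativity, that is, that the general translates above are transverse so that the virtual count equals the naive count. I would use that $F_{(1,0)}(X)$ is smooth of dimension $3$, together with a Kleiman-type argument for the families $\{p_1^{-1}(q)\}$ and $\{p_2^{-1}(p)\}$, exactly as implicitly done for $D$, $F$, $H$, $J$. If that is delicate, a cross-check is available: the identity $h_1^2\star h_2^2=h_1h_2^2\star h_1-2q_1h_2^2$, combined with the symmetric expression, constrains the coefficients consistently.
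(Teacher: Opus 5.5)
Your proposal is correct and follows the paper's proof. You reduce $L$, $M$, $N$ to the degree-$(1,0)$ invariants $\langle h_1^2,h_2^2,\gamma\rangle_{(1,0)}$ for $\gamma=h_2^2,\,h_1h_2,\,h_1^2$, and you kill each by the same emptiness arguments the paper uses: two distinct $p_2$-fibers, a $p_2$-fiber against a general line, and no tangent to $c_p$ through two general points.

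Your enumerativity worry is unnecessary here. For a vanishing statement, it suffices that no stable map meets the chosen cycles, and this emptiness already forces the invariant to be zero.
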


\proof That $2L=\langle h_1^2,h_2^2,h_2^2\rangle _{(1,0)}=0$ simply follows from the fact that this Gromov-Witten invariant counts curves that should be contracted by $p_2$ to two distinct points. Similarly $N=0$
since $\langle h_1^2,h_2^2,h_1h_2\rangle _{(1,0)}$ counts curves that should be contracted by $p_2$ to a fixed point and a general line. Finally,  
$M=0$  as well, since  $\langle h_1^2,h_2^2,h_1^2\rangle _{(1,0)}$ counts curves coming from tangents to a given conic, passing through two general points of $\PP^2$ - and there is no such tangent. \qed

\medskip
There remains to compute the quadratic terms. 

\begin{lemma}
$S=0$ and $T=8$. 
\end{lemma}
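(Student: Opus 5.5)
The plan is to read off $S$ and $T$ as three-point invariants with a point insertion. Since $1$ is dual to $\pt$, the coefficient of $q_1^2$ (resp. $q_1q_2$) in $h_1^2\star h_2^2$ is
$$S=\langle h_1^2,h_2^2,\pt\rangle_{(2,0)},\qquad T=\langle h_1^2,h_2^2,\pt\rangle_{(1,1)}.$$
Associativity alone will not work. Every route through the already known products, such as $h_1^2\star h_2^2=(h_1\star h_1-2q_1)\star(h_2\star h_2-2q_2)$, reduces again to $h_1h_2^2\star h_1$, and that product contains exactly these degree-two invariants. So both numbers have to be computed geometrically.

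\emph{Vanishing of $S$.} Represent $h_2^2$ by $p_2^{-1}(r)$ for a general point $r$, and take a general point $x\in X$. A curve of class $(2,0)$ is contracted by $p_2$. If it meets $p_2^{-1}(r)$ and also passes through $x$, then $p_2$ maps it both to $r$ and to $p_2(x)\neq r$, which is impossible. Hence $S=0$. The same argument handles the reducible $(2,0)$-curves, so no genericity issue arises here.

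\emph{Computation of $T$.} Represent $h_1^2$ by $p_1^{-1}(p)$ and $h_2^2$ by $p_2^{-1}(p')$, with $p,p',x$ general. By Proposition on $F_{(1,1)}(X)$, a $(1,1)$-conic $C$ maps under $p_1$ and $p_2$ to lines $L_1$ and $L_2$. Its image in $L_1\times L_2\simeq\PP^1\times\PP^1$ is a curve $\Gamma$ of bidegree $(1,1)$, and the preimage of $\Gamma$ in $X$ is a double cover branched along $\Gamma\cap B$, where $B$ is the branch divisor.

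\begin{enumerate}
\item Locate $C$. The incidence conditions force $L_1=\overline{p_1(x)p}$ and $L_2=\overline{p_2(x)p'}$. The class $\Gamma$ must pass through $y=(p_1(x),p_2(x))$, and the conditions of meeting the two fibres then hold automatically.
\item Express the splitting condition. Set $E=B\cap(L_1\times L_2)$, a smooth curve of bidegree $(2,2)$, so an elliptic curve for general data. The preimage of $\Gamma$ splits into two conics exactly when $\Gamma|_E$ has the form $2a+2b$, i.e. when $\Gamma$ is bitangent to $E$. Exactly one of the two components then passes through $x$.
\item Reduce to a plane curve. The $(1,1)$-curves through $y$ form a net. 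Restricting it to $E$ gives a $g^2_4$, i.e. a map $E\to\PP^2$ whose image is a plane quartic of geometric genus one, hence with $\delta=2$ nodes for general data. Divisors of the form $2a+2b$ correspond to bitangent lines of this quartic. Lines through a node give divisors $a+a'+\dots$ with $a\neq a'$, so they are not of this form and must be discarded.
\item Count bitangents by the Plücker formulas. The class is $d^*=12-2\delta=8$ and the number of flexes is $\kappa^*=3\cdot4\cdot2-6\delta=12$. Comparing genera of the dual curve gives $1=\binom{7}{2}-b-12$, so $b=8$. This yields $T=8$.
\end{enumerate}

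The main obstacle is transversality, i.e. showing that the enumerative count equals the Gromov--Witten invariant. This needs three checks.
\begin{itemize}
\item For general $p,p',x$, the curve $E$ and its plane image must be general enough, with only two ordinary nodes and only ordinary bitangents and flexes.
\item Reducible stable maps of class $(1,1)$ must not contribute. A $(1,0)$-line together with a $(0,1)$-line through the three cycles would need the $(1,0)$-component to lie in a fibre of $p_2$ over a point determined by the constraints, and a dimension count shows these configurations do not meet all three general cycles. Double covers do not arise in class $(1,1)$.
\item The moduli space must be unobstructed at the counted conics. Here I would use the smoothness of $F_{(1,1)}(X)$, a fivefold, from Proposition on $F_{(1,1)}(X)$, together with expected dimension $5+3=8$, which matches the codimension of the three constraints $2+2+4$.
\end{itemize}
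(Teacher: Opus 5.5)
Your argument for $S=0$ is the paper's. Your enumerative computation of $T$ is correct, but it follows a genuinely different route, and your remark that $T$ ``has to be computed geometrically'' is mistaken.

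\textbf{The paper's route.} The paper obtains $T$ formally. By the previous lemma and $S=0$, one has $h_1^2\star h_2^2=h_1^2h_2^2+Tq_1q_2$. Associativity with the known degree-three products then gives
\[
h_1^2h_2^2\star h_1=(h_1^2\star h_1)\star h_2^2-Tq_1q_2h_1=2q_1h_1h_2^2+8q_1q_2h_2+(16-T)q_1q_2h_1 .
\]
By the divisor axiom, the coefficients of $q_1q_2h_1$ and $q_1q_2h_2$ here are the two-point invariants $\langle h_1^2h_2^2,h_1^\vee\rangle_{(1,1)}$ and $\langle h_1^2h_2^2,h_2^\vee\rangle_{(1,1)}$. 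These agree by the symmetry exchanging the two factors of $\PP^2\times\PP^2$, hence $16-T=8$. The ingredient you overlooked is that the divisor axiom links $h_1\star$ and $h_2\star$; this relation is not implied by associativity.

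\textbf{Comparison.} The formal argument takes two lines and needs no transversality. Yours costs the checks you list: excluding reducible maps, unobstructedness via smoothness of $F_{(1,1)}(X)$, and generic smoothness of the evaluation map. In exchange it gives $T$ an honest enumerative meaning and independently confirms the value $8$.

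\textbf{A simplification of your count.} You can replace the Pl\"ucker step, and with it the genericity assumptions on flexes and bitangents, by a direct count. A divisor $2a+2b$ in $|\mathcal{O}_E(1,1)|$ is $2D$ with $D\in|\Theta|$, for one of the four square roots $\Theta$ of $\mathcal{O}_E(1,1)$. Each $|\Theta|$ traces a conic in $|\mathcal{O}_E(1,1)|\simeq\PP^3$, and these conics are pairwise disjoint. Since restriction to $E$ identifies $(1,1)$-curves with $|\mathcal{O}_E(1,1)|$, the $(1,1)$-curves through a general $y$ form a plane. That plane meets each conic in two distinct points, giving $4\cdot 2=8$.
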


\proof Clearly $S=0$, since $\langle h_1^2,h_2^2,\pt \rangle _{(2,0)}$  counts curves that, once again, should be contracted by $p_2$ to two distinct points. 

In order to compute $T$, we note that 
$$h_1^2h_2^2\star h_1=(h_1^2\star h_2^2-Tq_1q_2)\star h_1=2q_1h_1h_2^2+8q_1q_2h_2+16 q_1q_2h_1-Tq_1q_2h_1.$$
In particular $16-T=\langle h_1^2h_2^2, h_1, h_1^\vee\rangle _{(1,1)}$. By the Divisor Axiom this is also $\langle h_1^2h_2^2, h_1^\vee\rangle _{(1,1)}=\langle h_1^2h_2^2, h_2, 
h_1^\vee\rangle _{(1,1)}$, or $\langle h_1^2h_2^2, h_1, 
h_2^\vee\rangle _{(1,1)}$ by symmetry. But the coefficient of $q_1q_2h_2$ in $h_1^2h_2^2\star h_1$ is $8$, hence finally $T=16-8=8$. \qed

\medskip
We deduce all the missing products:
$$\begin{array}{rcl}
h_1^2h_2\star h_1 & = & 2q_1h_1h_2+4q_1h_2^2+8q_1q_2,\\
h_1h_2^2\star h_1 & = & h_1^2h_2^2+2q_1h_2^2+8q_1q_2,\\
h_1^2h_2^2\star h_1& = & 2q_1h_1h_2^2+8q_1q_2h_1+8q_1q_2h_2, 
\end{array}$$

$$\begin{array}{rcl}
h_1^2h_2\star h_2  &=  &h_1^2h_2^2+2q_2h_1^2+8q_1q_2, \\
h_1h_2^2\star h_2 & = & 2q_2h_1h_2+4q_2h_1^2+8q_1q_2,\\
h_1^2h_2^2\star h_2& = & 2q_2h_1^2h_2+8q_1q_2h_1+8q_1q_2h_2.
\end{array}$$

\begin{prop} 
\label{prop:mult-div}The quantum product by $a_1h_1+a_2h_2$ is  given by the following matrix:
$$M_{a_1h_1+a_2h_2}=\small\begin{pmatrix}
0&2b_1&2b_2&0&0&0&c&c&0 \\
a_1&0&0&2b_1&2b_2&4b_2&0&0&c \\
a_2&0&0&4b_1&2b_1&2b_2&0&0&c \\
0&a_1&0&0&0&0&2b_2&4b_2&0 \\
0&a_2&a_1&0&0&0&2b_1&2b_2&0 \\
0&0&a_2&0&0&0&4b_1&2b_1&0 \\
0&0&0&a_2&a_1&0&0&0&2b_2 \\
0&0&0&0&a_2&a_1&0&0&2b_1 \\
0&0&0&0&0&0&a_2&a_1&0
\end{pmatrix},$$
where $b_1=a_1q_1$, $b_2=a_2q_2$, $c=8(a_1+a_2)q_1q_2$
\end{prop}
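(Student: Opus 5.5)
The matrix is obtained by assembling, by linearity, the quantum products with $h_1$ and $h_2$ computed in the preceding lemmas. Since $\star$ is bilinear, the matrix of $(a_1h_1+a_2h_2)\star$ equals $a_1M_{h_1}+a_2M_{h_2}$. The plan is therefore to write $M_{h_1}$ and $M_{h_2}$ in the ordered basis
$$(1,\ h_1,\ h_2,\ h_1^2,\ h_1h_2,\ h_2^2,\ h_1^2h_2,\ h_1h_2^2,\ h_1^2h_2^2),$$
with the $j$-th column holding the coordinates of $h\star e_j$, and then add them.

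First, column by column for $h_1$, I will list:
\begin{itemize}
\item $1\star h_1=h_1$;
\item $h_1\star h_1=h_1^2+2q_1$;
\item $h_2\star h_1=h_1h_2$;
\item the three degree-three products $h_1^2\star h_1$, $h_1h_2\star h_1$, $h_2^2\star h_1$ from the table after the lemma $F=4$;
\item the three degree-four products $h_1^2h_2\star h_1$, $h_1h_2^2\star h_1$, $h_1^2h_2^2\star h_1$ just displayed.
\end{itemize}
The $h_2$ columns then follow from the symmetry exchanging the two factors ($h_1\leftrightarrow h_2$, $q_1\leftrightarrow q_2$), or directly from the second displayed list.

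Next I multiply by $a_1$, $a_2$ and add. The entries then take the form $a_1\cdot(\text{coefficient of }q_1)$ and $a_2\cdot(\text{coefficient of }q_2)$. Setting $b_1=a_1q_1$ and $b_2=a_2q_2$, these become the entries $2b_1$, $4b_1$, $2b_2$, $4b_2$. Every $q_1q_2$ coefficient equals $8$ in both $M_{h_1}$ and $M_{h_2}$, so those terms combine into $c=8(a_1+a_2)q_1q_2$.

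There is no real obstacle beyond bookkeeping. One check is worth making: the subdiagonal entries $a_1,a_2$ must reproduce classical multiplication. In particular, the last row picks up $h_1^2h_2^2$ from $h_1h_2^2\star h_1$ and from $h_1^2h_2\star h_2$.

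A second check concerns the degree-four products recorded earlier. As displayed, $h_1^2h_2\star h_1$ has no classical term $h_1^2h_2\cdot h_1=h_1^3h_2=0$, which is consistent. By contrast, $h_1^2h_2^2\star h_1$ should have zero $h_1h_2^2$ coordinate, because the $q_1$-degree bookkeeping places it in the $(8,9)$ entry. That entry is $2b_1$, which does match the displayed term $2q_1h_1h_2^2$.

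I will verify every entry against the listed products in this way. I expect the main risk to be transcription errors in the column ordering, so I will double-check each degree-three column against the table after the lemma $F=4$.
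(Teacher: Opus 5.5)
Your proposal is correct and matches how the paper obtains the proposition: the matrix is $a_1M_{h_1}+a_2M_{h_2}$, assembled column by column from the previously derived products $h_i\star e_j$, and every entry checks out. The divisor axiom is what makes $M_{h_1}$ involve only $q_1,q_1q_2$ and $M_{h_2}$ only $q_2,q_1q_2$.

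One slip in your side remark: $h_1^2h_2^2\star h_1$ has zero \emph{classical} part (since $h_1^3=0$), not zero $h_1h_2^2$-coordinate. That coordinate is $2q_1$, which gives the $(8,9)$ entry $2b_1$, as you then observe.
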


\normalsize

\subsection{Quantum multiplication by the anticanonical class} 

As a special case of Proposition \ref{prop:mult-div}, the matrix of the quantum product by $c_1(X)=2(h_1+h_2)$ is 
$$M_{c_1(X)}=\small\begin{pmatrix}
0&4q_1&4q_2&0&0&0&32q_1q_2&32q_1q_2&0 \\
2&0&0&4q_1&4q_2&8q_2&0&0&32q_1q_2 \\
2&0&0&8q_1&4q_1&4q_2&0&0&32q_1q_2 \\
0&2&0&0&0&0&4q_2&8q_2&0 \\
0&2&2&0&0&0&4q_1&4q_2&0 \\
0&0&2&0&0&0&8q_1&4q_1&0 \\
0&0&0&2&2&0&0&0&4q_2 \\
0&0&0&0&2&2&0&0&4q_1 \\
0&0&0&0&0&0&2&2&0
\end{pmatrix}$$

\normalsize

\begin{prop}
\label{comput}
For generic $q_1,q_2$, the multiplication by the canonical class in the ambient cohomology has six nonzero simple eigenvalues, and a three-dimensional kernel   
$E_0^{\mathrm{amb}}= \langle \alpha, \beta, \gamma \rangle$ where
$$
\alpha:=-h_1^2+h_1h_2-h_2^2+2(q_1+q_2),\quad \beta:=h_1h_2(h_2-h_1)+2(q_1h_2-q_2h_1),
$$
$$
\gamma:=h_1^2h_2^2-2(q_1h_2^2+q_2h_1^2+2q_1q_2).
$$
\end{prop}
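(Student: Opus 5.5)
The proof is a direct linear-algebra computation with the matrix $M_{c_1(X)}$ displayed above, in the ordered basis $1,h_1,h_2,h_1^2,h_1h_2,h_2^2,h_1^2h_2,h_1h_2^2,h_1^2h_2^2$. Since $c_1(X)=2(h_1+h_2)$, it is enough to work with $M_{h_1+h_2}$ (take $a_1=a_2=1$ in Proposition \ref{prop:mult-div}). The plan has two parts: first show that $\alpha,\beta,\gamma$ lie in the kernel and are linearly independent, then show that the characteristic polynomial factors as $\lambda^3 P(\lambda)$, where $P$ has degree six, nonzero constant term and nonzero discriminant for generic $q_1,q_2$.

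\emph{The kernel.} For each of $\alpha,\beta,\gamma$ I would compute $(h_1+h_2)\star x$ using the products listed before Proposition \ref{prop:mult-div} and check that it vanishes. For example,
\begin{align*}
(h_1+h_2)\star\alpha &= -(2q_1h_1+4q_1h_2+h_1^2h_2) + (h_1^2h_2+2q_1h_2+h_1h_2^2+2q_2h_1) - (h_1h_2^2+2q_2h_2+4q_2h_1) \\
&\quad + 2(q_1+q_2)(h_1+h_2),
\end{align*}
and this collapses to $0$. I would do the analogous bookkeeping for $\beta$ (degree-four products) and $\gamma$ (the products $h_1^2h_2^2\star h_i$, $h_1h_2^2\star h_1$, $h_1^2h_2\star h_2$, and the degree-two products). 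Linear independence is immediate: the classical parts $-h_1^2+h_1h_2-h_2^2$, $h_1h_2(h_2-h_1)$ and $h_1^2h_2^2$ live in different degrees.

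\emph{The rest of the spectrum.} The kernel must have dimension exactly three, and the remaining eigenvalues must be simple and nonzero. I would use the $\ZZ/2$-grading by degree together with homogeneity. Setting $q_1=q_2=q$ first and then deforming is one option, but the cleaner route is a direct computation of $\det(\lambda I-M_{h_1+h_2})$. The swap symmetry $h_1\leftrightarrow h_2$, $q_1\leftrightarrow q_2$ helps here: on the symmetric locus $q_1=q_2$, the space decomposes into invariant and anti-invariant parts of dimensions $6$ and $3$, which reduces the work to smaller determinants. Because the operator has degree $2$, its eigenvalues scale under $q_i\mapsto t^2q_i$, so the characteristic polynomial is a polynomial in $\lambda^2$ times powers of $\lambda$. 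I expect the result $\lambda^3\,R(\lambda^2)$ with $R$ a cubic. It then suffices to check three things at one convenient specialization: the constant term of $R$ (essentially a $6\times6$ minor) is nonzero, $R$ has distinct roots, and $R(0)\neq0$. Nonzero discriminant and nonzero constant term are open conditions, so they then hold for generic $(q_1,q_2)$. Distinct nonzero roots $\mu$ of $R$ give the six simple eigenvalues $\pm\sqrt{\mu}$. Finally, since the kernel has dimension at least $3$ and the algebraic multiplicity of $0$ is exactly $3$, the kernel is exactly $\langle\alpha,\beta,\gamma\rangle$.

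\emph{Main obstacle.} The difficulty is computing this characteristic polynomial (or $R$) reliably. I would first try the specialization $q_1=q_2=1$, using the symmetric/antisymmetric splitting to reduce to a $6\times 6$ and a $3\times3$ block, and verify the discriminant there. If that specialization turns out to be degenerate, for instance with a repeated root caused by the symmetry, I would instead specialize to something like $q_1=1$, $q_2=2$ and compute the $9\times 9$ determinant directly.
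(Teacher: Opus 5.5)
Your plan is correct and is essentially the paper's approach. The paper gives no argument beyond the displayed matrix $M_{c_1(X)}$: the proposition is the outcome of a direct (computer-aided) linear-algebra computation, and your check that $\alpha,\beta,\gamma$ are killed by $(h_1+h_2)\star$ goes through.

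Your fallback specialization is genuinely needed, because on the diagonal $q_1=q_2=q$ the cubic $R$ has roots $32q,-4q,-4q$, hence a double root. In general $\det(\lambda I-M_{h_1+h_2})=\lambda^3R(\lambda^2)$ with $R(4\nu)=64\bigl((\nu-q_1-q_2)^3-27q_1q_2\nu\bigr)$. Its discriminant is a nonzero multiple of $q_1^2q_2^2(q_1-q_2)^2$ and its constant term is $-64(q_1+q_2)^3$. The quickest route is the $4\times 4$ matrix of $M_{h_1+h_2}^2$ on the odd part $\langle h_1,h_2,h_1^2h_2,h_1h_2^2\rangle$, via $\det(\lambda I-M_{h_1+h_2})=\lambda\det(\lambda^2 I-M_{h_1+h_2}^2|_{\mathrm{odd}})$.
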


\begin{prop}\label{zero-antiinvariant}
The multiplication by the canonical class restricted to the primitive cohomology $H^\bullet(X)^{\mathrm{prim}}$ is zero.
\end{prop}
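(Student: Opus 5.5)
Let $\tau$ be the covering involution of $\pi:X\to\PP^2\times\PP^2$. The plan is to show that for any primitive class $\eta\in H^4(X)^{\mathrm{prim}}$ the quantum product $c_1(X)\star\eta$ equals the classical product $c_1(X)\cup\eta$, and that this classical product vanishes. Write
$$c_1(X)\star\eta=c_1(X)\cup\eta+\sum_{d\neq 0} q^d\sum_k \langle c_1(X),\eta,e_k\rangle_d\, e_k^\vee ,$$
where $(e_k)$ runs over a basis of $H^\bullet(X)$.

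The classical term vanishes by a degree and invariance argument. The class $c_1(X)\cup\eta$ lies in $H^6(X)$, and $H^6(X)$ is entirely ambient by the Hodge diamond, since $b_6=2$ and both $h_1^2h_2$ and $h_1h_2^2$ lie there. Since $c_1(X)$ is $\tau$-invariant and $\eta$ is anti-invariant, the product $c_1(X)\cup\eta$ is anti-invariant. But an anti-invariant class lying in an invariant space is zero. Equivalently, $\eta$ is orthogonal to $c_1\cup H^2$.

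For the quantum corrections, apply the Divisor Axiom:
$$\langle c_1,\eta,e_k\rangle_d=(c_1\cdot d)\,\langle \eta,e_k\rangle_d .$$
Since $c_1\cdot d>0$, it suffices to show $\langle\eta,e\rangle_d=0$ for every class $e$ and every $d\neq0$. The key point is that $\tau$ acts on $\overline{M}_{0,2}(X,d)$, since $\tau^*$ fixes $h_1$ and $h_2$ and hence preserves curve classes. It preserves the virtual class, because $\tau$ is an automorphism of $X$. This gives
$$\langle\tau^*\eta,\tau^*e\rangle_d=\langle\eta,e\rangle_d .$$
Split $e=e^++e^-$ into invariant and anti-invariant parts. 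Then $\langle\eta,e^+\rangle_d=-\langle\eta,e^+\rangle_d$, so it is zero.

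The main obstacle is the remaining term $\langle\eta,e^-\rangle_d$ with both insertions primitive. Here $e^-$ is also in $H^4$, and the dimension constraint reads
$$\mathrm{vdim}=4+c_1\cdot d+2-3=3+c_1\cdot d,$$
while the insertions have total degree $4$ (complex codimension $2+2$). So we need $c_1\cdot d=1$. This is impossible, since $c_1=2(h_1+h_2)$ makes $c_1\cdot d$ even. Hence no such invariants occur, and all quantum corrections to $c_1(X)\star\eta$ vanish.

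Combining the two parts gives $c_1(X)\star\eta=0$ for every primitive $\eta$. I expect the only delicate points to be checking that $\tau$ preserves the degrees $d$ and the virtual fundamental class, both of which are standard for automorphisms.
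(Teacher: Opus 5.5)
Your proof is correct, but its key vanishing step uses a different tool from the paper's.

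The paper invokes the Lee--Pirola theorem that the monodromy of the family of Verra fourfolds acts irreducibly on $H^\bullet(X)^{\mathrm{prim}}$, together with deformation invariance. A Gromov--Witten invariant with exactly one primitive insertion is then a monodromy-invariant linear form on a nontrivial irreducible representation, hence zero. A degree count shows that the remaining insertion $\beta^\vee$ in $\langle c_1,\gamma,\beta^\vee\rangle_{d}$ cannot lie in $H^4$, so it is ambient.

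You instead let the deck involution act on $\overline{M}_{0,2}(X,d)$. It preserves the bidegree because $\tau^*$ is the identity on $H^2$. This kills every invariant with an odd number of anti-invariant insertions. You then rule out the case of two primitive insertions by the parity of $c_1\cdot d=2(d_1+d_2)$, which is the same degree bookkeeping as the paper's remark on $\deg\beta$, phrased after applying the divisor axiom. You also make explicit why the classical term $c_1\cup\eta\in H^6(X)=H^6(X)^{\mathrm{amb}}$ vanishes, which the paper leaves implicit.

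Your route is more elementary. It works on a single Verra fourfold, with no reference to the family and no external theorem. The monodromy argument is stronger and is reused later. In Lemma \ref{lem_mult_id_antiinv} the paper needs the deformed Euler multiplication to act on $H^\bullet(X)^{\mathrm{prim}}$ as a scalar. That follows from irreducibility by a Schur-type argument, whereas the involution alone only shows that this operator preserves $H^\bullet(X)^{\mathrm{prim}}$.
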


\begin{proof}
By \cite[Theorem 1.6]{LeePirola} the monodromy of the family of Verra fourfolds (i.e. double covers of  $\PP^2\times \PP^2$ ramified over a $(2,2)$-divisor) acts on $H^\bullet(X)^{\mathrm{prim}}$ irreducibly. By the invariance of Gromov-Witten-invariants under deformation, we deduce that all Gromov-Witten-invariants with only one anti-invariant insertion vanish. Now, let $\gamma\in H^\bullet(X)^{\mathrm{prim}}$; then
$$
(2h_1+2h_2) \star \gamma= (2h_1+2h_2) \cup \gamma +\sum_{d_1,d_2\geq 1} \sum_\beta \langle (2h_1+2h_2), \gamma, \beta^\vee \rangle_{d_1,d_2}q_1^{d_1}q_2^{d_2}\beta.
$$
Notice that, for any $d_1,d_2$, since $\deg(\gamma)=4$, the degree of $\beta$ (or $\beta^\vee$) is not equal to four. Therefore $\beta^\vee$ is invariant, and $\langle (2h_1+2h_2), \gamma, \beta^\vee \rangle_{d_1,d_2}$ has only one anti-invariant insertion, hence it vanishes. 
\end{proof}

\subsection{Quantum products by the other ambient classes}
Since $h_1$ and $h_2$ generate the full ambient cohomology algebra (at least over $\QQ$), we can easily deduce, from the quantum multiplication rules by $h_1$ and $h_2$, the full quantum multiplication table for the ambient cohomology. For convenience we record the matrices $M_\tau$ of quantum multiplication by a monomial class $\tau$ of degree bigger than one. We use the notation $q_0=q_1+q_2$.

$$M_{h_1^2}=\small
\begin{pmatrix}
0&0&0&4q_1^2&8q_1q_2&8q_1q_2&0&0&32q_1^2q_2 \\
0&2q_1&0&0&0&0&8q_1q_2&16q_1q_2&0 \\
0&4q_1&0&0&0&0&4q_1^2&8q_1q_2&0 \\
1&0&0&0&0&0&0&0&8q_1q_2 \\
0&0&0&4q_1&2q_1&0&0&0&8q_1q_2 \\
0&0&0&0&4q_1&0&0&0&4q_1^2 \\
0&0&1&0&0&0&0&0&0 \\
0&0&0&0&0&0&4q_1&2q_1&0 \\
0&0&0&0&0&1&0&0&0 
\end{pmatrix}$$
\normalsize

$$M_{h_1h_2}=\small\begin{pmatrix}
0&0&0&8q_1q_2&12q_1q_2&8q_1q_2&0&0&16q_1q_2q_0 \\
0&0&2q_2&0&0&0&20q_1q_2&16q_1q_2&0 \\
0&2q_1&0&0&0&0&16q_1q_2&20q_1q_2&0 \\
0&0&0&0&2q_2&4q_2&0&0&8q_1q_2 \\
1&0&0&2q_1&0&2q_2&0&0&12q_1q_2 \\
0&0&0&4q_1&2q_1&0&0&0&8q_1q_2 \\
0&1&0&0&0&0&0&2q_2&0 \\
0&0&1&0&0&0&2q_1&0&0 \\
0&0&0&0&1&0&0&0&0 
\end{pmatrix}$$

\normalsize

$$M_{h_2^2}=\small\begin{pmatrix}
0&0&0&8q_1q_2&8q_1q_2&4q_2^2&0&0&32q_1q_2^2 \\
0&0&4q_2&0&0&0&8q_1q_2&4q_2^2&0 \\
0&0&2q_2&0&0&0&16q_1q_2&8q_1q_2&0 \\
0&0&0&0&4q_2&0&0&0&4q_2^2 \\
0&0&0&0&2q_2&4q_2&0&0&8q_1q_2 \\
1&0&0&0&0&0&0&0&8q_1q_2 \\
0&0&0&0&0&0&2q_2&4q_2&0 \\
0&1&0&0&0&0&0&0&0 \\
0&0&0&1&0&0&0&0&0 
\end{pmatrix}$$

\normalsize

$$M_{h_1^2h_2}=\small\begin{pmatrix}
0&8q_1q_2&8q_1q_2&0&0&0&40q_1^2q_2&16q_1q_2q_0&0 \\
0&0&0&8q_1q_2&20q_1q_2&8q_1q_2&0&0&16q_1q_2q_0 \\
0&0&0&4q_1^2&16q_1q_2&16q_1q_2&0&0&40q_1^2q_2 \\
0&0&2q_2&0&0&0&16q_1q_2&8q_1q_2&0 \\
0&2q_1&0&0&0&0&16q_1q_2&20q_1q_2&0 \\
0&4q_1&0&0&0&0&4q_1^2&8q_1q_2&0 \\
1&0&0&0&0&2q_2&0&0&8q_1q_2 \\
0&0&0&4q_1&2q_1&0&0&0&8q_1q_2 \\
0&0&1&0&0&0&0&0&0 
\end{pmatrix}$$

\normalsize

$$M_{h_1h_2^2}=\small\begin{pmatrix}
0&8q_1q_2&8q_1q_2&0&0&0&16q_1q_2q_0&40q_1q_2^2&0 \\
0&0&0& 16q_1q_2&16q_1q_2&4q_2^2 &0&0& 40q_1q_2^2 \\
0&0&0& 8q_1q_2&20q_1q_2&8q_1q_2 &0&0& 16q_1q_2q_0 \\
0&0&4q_2&0&0&0&8q_1q_2&4q_2^2&0 \\
0&0& 2q_1 &0&0&0&20q_1q_2&16q_1q_2&0 \\
0&2q_1&0&0&0&0&8q_1q_2&16q_1q_2&0 \\
0&0&0&0&2q_2&4q_2&0&0&8q_1q_2 \\
1&0&0&2q_1&0&0&0&0&8q_1q_2 \\
0&1&0&0&0&0&0&0&0 
\end{pmatrix}$$

\normalsize

$$M_{h_1^2h_2^2}=\small\begin{pmatrix}
0&0&0&32q_1^2q_2&16q_1q_2q_0&32q_1q_2^2&0&0&144q_1^2q_2^2 \\
0&8q_1q_2&8q_1q_2&0&0&0&16q_1q_2q_0&40q_1q_2^2&0 \\
0&8q_1q_2&8q_1q_2&0&0&0&40q_1^2q_2&16q_1q_2q_0&0 \\
0&0&0&8q_1q_2&8q_1q_2&4q_2^2&0&0&32q_1q_2^2 \\
0&0&0&8q_1q_2&12q_1q_2&8q_1q_2&0&0&16q_1q_2q_0 \\
0&0&0&4q_1^2&8q_1q_2&8q_1q_2&0&0&32q_1^2q_2 \\
0&0&2q_2&0&0&0&8q_1q_2&8q_1q_2&0 \\
0&2q_1&0&0&0&0&8q_1q_2&8q_1q_2&0 \\
1&0&0&0&0&0&0&0&0 
\end{pmatrix}$$

\normalsize

\medskip

\subsection{Comparison with cubics and Gushel--Mukai fourfolds}
In this Section, we use exactly the same notations as in \cite{jg}, except for the following Remark.

\begin{remark}
Given a variety $X$ a number field $K$ and an additional formal variable $\mathfrak{q}'$ of degree in $\{1;2\}$, the ring $\widehat{R}^*(X,K)$ in \cite{jg} is defined as
$$\widehat{R}^*(X,K) := K[{\mathfrak{q}'}^{\pm 1}][[Q,(T_k)_k]],$$
where $Q$ is the Novikov variable of $X$ and $(T_k)_k$ are formal variables attached to a given basis $(\alpha_k)_k$ of the Hodge locus $H^*(X)^\Hdg$.
Here, we rename it as $\widehat{R}^*(X,K)_\Hdg$ and we use the notation $\widehat{R}^*(X,K)_{(0)}$ for the corresponding ring where $(T_k)_k$ are formal variables attached to a given basis $(\alpha_k)_k$ of the degree-zero Hochschild cohomology $H^{(0)}(X)$.

Specifically, Property $\clubsuit_{\widehat{R}^*(X,K)_\Hdg}$ (see \cite[Definition 27]{jg}) is defined using evaluation maps $\ev \colon \widehat{R}^*(X,K)_\Hdg \to S^*_K$, whereas Property $\coeur_{\widehat{R}^*(X,K)_{(0)}}$ is actually defined using evaluation maps $\ev \colon \widehat{R}^*(X,K)_{(0)} \to S^*_K$.
This distinction is crucial in the following proof.
\end{remark}

\begin{prop}
\label{prop_properties}
Let $X$ be any Verra fourfold and $K$ be any number field.
Then $X$ satisfies Property $\clubsuit_{\widehat{R}^*(X,K)_\Hdg}$ and fails to satisfy Property $\coeur_{\widehat{R}^*(X,K)_{(0)}}$.
\end{prop}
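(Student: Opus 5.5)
We recall the definitions from \cite{jg}. Property $\clubsuit$ asks that, after the evaluation maps $\ev\colon \widehat{R}^*(X,K)_\Hdg\to S^*_K$, the quantum multiplication by $c_1(X)$, restricted to the (deformed) Hodge part, has a $0$-generalized eigenspace of the expected small size (at most one-dimensional in the relevant Hochschild degree). Property $\coeur$ asks for a non-vanishing or non-degeneracy condition on the big quantum product restricted to the degree-zero Hochschild part $H^{(0)}(X)$, evaluated at points of $S^*_K$. The proof therefore has two parts: verify $\clubsuit$ using the Hodge classes, and exhibit the obstruction to $\coeur$ using the primitive cohomology.

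\emph{Step 1: reduction to the ambient part.} By Lemma \ref{lem_Hodgeclasses_verygeneral} and deformation invariance of Gromov--Witten invariants, we may work with the ambient cohomology. It sits inside $H^\bullet(X)^{\Hdg}$ for every $X$. Any additional Hodge classes lie in $H^\bullet(X)^{\mathrm{prim}}$, which has Hochschild degree zero. By Proposition \ref{zero-antiinvariant} (via the monodromy argument), primitive classes contribute nothing to correlators that contain a single primitive insertion. So the big quantum product at a point $\sum T_k\alpha_k$ splits block-diagonally into ambient $\oplus$ primitive, to first order in the $T_k$ along ambient directions. On the primitive block, $c_1\star$ is zero at small parameters by Proposition \ref{zero-antiinvariant}. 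On the ambient block it is given by $M_{c_1(X)}$.

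\emph{Step 2: $\clubsuit$.} We use Proposition \ref{comput}: at the small point, for generic $q_1,q_2$, the ambient kernel $E_0^{\mathrm{amb}}=\langle\alpha,\beta,\gamma\rangle$ is $3$-dimensional, and the other six eigenvalues are simple. We deform along the ambient Hodge directions $T_k$, namely $h_1^2,h_1h_2,h_2^2$, and compute the first-order perturbation of $c_1\star$ restricted to $E_0^{\mathrm{amb}}$. This is given by the matrices $M_{h_1^2},M_{h_1h_2},M_{h_2^2}$ projected to $E_0^{\mathrm{amb}}$, using $\partial_{T}(c_1\star)=c_1\star (\tau\star)$-type formulas from WDVV. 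We then show that a generic combination splits the eigenvalue $0$ of the Hodge part into distinct eigenvalues, each with the Hochschild-degree pattern required by $\clubsuit$. The expectation is that the perturbed $3\times 3$ block has distinct nonzero-discriminant eigenvalues. Since everything is defined over $\QQ$, this holds for every number field $K$.

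\emph{Step 3: failure of $\coeur$.} Now we use the variables attached to $H^{(0)}(X)$, which includes the $21$-dimensional primitive part. By Step 1, deforming in ambient degree-zero directions keeps the primitive block of $c_1\star$ equal to $0$ to the relevant order. The correlators mixing it with the ambient part vanish because they would have a single anti-invariant insertion. The primitive block therefore contributes a $21$-dimensional $0$-eigenspace that cannot be separated. Moreover, the ambient kernel component meets it in a way that violates the condition defining $\coeur$, whatever the evaluation map $\ev$.

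\emph{Main obstacle.} The hardest step is Step 2: the explicit first-order perturbation computation on $E_0^{\mathrm{amb}}$, and checking that it is nondegenerate in the precise sense of \cite[Definition 27]{jg}. I would first compute $\alpha\star\tau$, $\beta\star\tau$ and $\gamma\star\tau$ for $\tau\in\{h_1^2,h_1h_2,h_2^2\}$ from the recorded matrices, and try a single direction such as $\tau=h_1h_2$ before taking generic combinations.
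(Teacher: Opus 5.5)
Your Step 3 is essentially the paper's argument for the failure of $\coeur$, but it should be made precise. One evaluation map suffices: $\ev_0$ with $q_i\mapsto\zeta_i b^4$ and $T_k\mapsto 0$. For it, Propositions \ref{comput} and \ref{zero-antiinvariant} show that $\mathbb{E}^X_{\ev_0,0}$ contains $H^\bullet(X)^{\mathrm{prim}}$ together with the three-dimensional ambient kernel. Hence $\nu^X_{\ev_0,0}=1$, ${\nu'}^X_{\ev_0,0}=0$, $\gamma^X_{\ev_0,0}=0$. You need not say ``whatever the evaluation map''. Also, your claim that ambient deformations keep the primitive block zero is false beyond order zero: by Lemma \ref{lem_mult_id_antiinv} it becomes $\tilde\lambda(t)\,\mathrm{id}$ with $\tilde\lambda(t)=t\lambda+O(t^2)$. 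This is harmless for $\ev_0$.

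The genuine gap is Step 2, where you have $\clubsuit$ backwards. Failing $\clubsuit_{\widehat{R}^*(X,K)_\Hdg}$ means there are $\ev_1,\alpha_1$ with $\nu^X_{\ev_1,\alpha_1}\neq0$ and $\rho^X_{\ev_1,\alpha_1}<3$, i.e.\ an eigenspace with $\nu\neq 0$ whose Hodge part has rank at most $2$. So $\clubsuit$ is a K3-like \emph{lower} bound, not a smallness condition. Splitting the ambient kernel into distinct eigenvalues would prove the negation of $\clubsuit$. Indeed, the primitive block moves with eigenvalue $\tilde\lambda(t)$, so its eigenspace would keep at most one ambient class.

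The splitting does not happen anyway. Proposition \ref{deformation} shows that along any ambient $\tau$ the $3\times3$ block has, at first order, the single eigenvalue $t\lambda$ with nilpotent part $t\mu$, exactly matching the primitive block. (The relevant operator is the Euler multiplication of Lemma \ref{lem_comp_def}, not a composition $c_1\star\tau\star$.) Besides, no first-order computation along $h_1^2,h_1h_2,h_2^2$ controls all evaluation maps. It misses all orders in the $T_k$, and, for Hodge-special $X$, the primitive Hodge directions that your Step 1 discards; Lemma \ref{lem_Hodgeclasses_verygeneral} only concerns very general $X$.

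The missing idea is a contradiction argument through a rational Verra fourfold:
\begin{enumerate}
\item Extend $\ev_1$ to $\widehat{R}^*(X,K)_{(0)}$ by sending the non-Hodge variables to $0$. Then $\mathrm{rk}\,\mathbb{E}^X_{\ev_1,\alpha_1}\le 21+2<24$.
\item Take $X'$ rational, which exists by Proposition \ref{prop_rational_Verra}. Since $X'$ fails $\coeur$ (your Step 3, valid for every Verra fourfold), the proof of \cite[Theorem 56]{jg} forces a K3 blow-up center $\Sigma$ in any weak factorization between $X'$ and $\PP^4$.
\item For every evaluation, $H^*(\Sigma)$ embeds into the unique eigenspace with $\nu\neq0$, which is therefore $\mathbb{E}^{X'}_{\ev_1,\alpha_1}$.
\item Since $H^{(0)}(X)=H^{(0)}(X')$ and Gromov--Witten invariants are deformation invariant, the rank is $\ge 24$, a contradiction.
\end{enumerate}
Note that this uses the failure of $\coeur$ to prove $\clubsuit$, so the two halves are not independent as your plan assumes.
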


\begin{proof}
For any $\zeta_1,\zeta_2 \in D(0,1)$, let $\ev_0 \colon \widehat{R}^*(X,\QQ)_{(0)} \to S^*$ be the $\QQ$-evaluation map defined by
$$\ev_0(\mathfrak{q}')=b^{\deg(\mathfrak{q}')}~,~~\ev_0(q_1)= \zeta_1 b^4~,~~\ev_0(q_2)= \zeta_2 b^4 ~,~~ \textrm{and} \quad \ev_0(T_k) = 0,$$
for each $0 \leq k \leq h$.

Propositions \ref{comput} and \ref{zero-antiinvariant} show that, for all values of $\zeta_1$ and $\zeta_2$, the generalized kernel $\mathbb{E}^X_{\ev_0,0}$ contains the primitive cohomology and a three-dimensional subspace of the ambient cohomology.
Hence, we obtain 
$$\nu^X_{\ev_0, 0}=1~,~~{\nu'}^X_{\ev_0, 0}=0~,~~\textrm{and}~~\gamma^X_{\ev_0, 0}=0$$
so that $X$ fails to satisfy Property $\coeur_{\widehat{R}^*(X,K)_{(0)}}$.

Assume for a moment that $X$ fails to satisfy Property $\clubsuit_{\widehat{R}^*(X,K)_\Hdg}$.
Then there exists an evaluation map $\ev_1 \colon \widehat{R}^*(X,K)_\Hdg \to S^*_K$ and an eigenvalue $\alpha_1 \in S^*_{K_{\ev_1}}$ satisfying
$$\nu^X_{\ev_1, \alpha_1} \neq 0~~\textrm{and}~~\rho^X_{\ev_1,\alpha_1}<3.$$
This evaluation map extends to an evaluation map $\ev_1 \colon \widehat{R}^*(X,K)_{(0)} \to S^*_K$ by sending the new variables $T_k$ (corresponding to classes of Hochschild degree zero that are not Hodge classes) to zero.
Since the dimension of the transcendental part of $H^*(X,\QQ)$ is $21$ and the dimension of the algebraic part is at most $2$, then we get
$$\mathrm{rk}_{S_{K_{\ev_1}}} (\mathbb{E}^X_{\ev_1,\alpha_1}) \leq  21+2<24.$$

Let now $X'$ be a rational Verra fourfold, whose existence is guaranteed by Proposition \ref{prop_rational_Verra}.
By \cite[proof of Theorem 56]{jg}, since $X'$ fails to satisfy Property $\coeur_{\widehat{R}^*(X,K)_{(0)}}$, then for any weak factorization of a birational morphism from $X'$ to $\PP^4$, one of the blow-up center must be a $K3$ surface $\Sigma$.
Furthermore, for any evaluation map $\ev \colon \widehat{R}^*(X',K)_{(0)} \to S^*_K$, there exists a unique $\alpha \in S^*_{K_\ev}$ such that
$$H^*(\Sigma, K_\ev)_{S_{K_\ev}} \hookrightarrow \mathbb{E}^{X'}_{\ev,\alpha}.$$
Note that this eigenvalue $\alpha$ is exactly the unique one that satisfies
$$\nu^{X'}_{\ev,\alpha} \neq 0.$$
Therefore $\alpha$ coincides with $\alpha_1$ when we consider the evaluation $\ev_1$. Since $H^{(0)}(X) = H^{(0)}(X')$ and since Gromov--Witten theory is deformation-invariant, we have
$$\mathrm{rk}_{S_{K_{\ev_1}}} (\mathbb{E}^X_{\ev_1,\alpha_1}) = \mathrm{rk}_{S_{K_{\ev_1}}} (\mathbb{E}^{X'}_{\ev_1,\alpha_1}) \geq 24,$$
yielding a contradiction.
Hence, $X$ must satisfy Property $\clubsuit_{\widehat{R}^*(X,K)_\Hdg}$.
\end{proof}

Recall that there are two derived equivalent generically, non-isomorphic degree two $K3$ surfaces $\Sigma_1$ and $\Sigma_2$ associated to $X$. They have isomorphic rational Hodge structures, which are also isomorphic to $H^4(X)^{\mathrm{prim}}$.

\begin{coro}
\label{rem_irr_atom}
Let $X$ be any Verra fourfold.
The $24$-dimensional generalized kernel found in the computation of Propositions \ref{comput} and \ref{zero-antiinvariant} is indecomposable, i.e.,  does not split when taking a general evaluation map $\ev \colon \widehat{R}^*(X,K)_{(0)} \to S^*_K$; it is an ‘‘irreducible'' atom of $X$. Furthermore, 
it is isomorphic, as a rational Hodge structure, to the rational cohomology of both $\Sigma_1$ and $\Sigma_2$.
\end{coro}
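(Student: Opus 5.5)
The corollary has two parts: indecomposability of the $24$-dimensional generalized kernel, and the identification of its Hodge structure. I would derive both from Proposition \ref{prop_properties} and the structure of its proof, together with Propositions \ref{comput} and \ref{zero-antiinvariant}.

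\textbf{Step 1: the kernel and its Hodge structure.} For the evaluation $\ev_0$ used in the proof of Proposition \ref{prop_properties}, Propositions \ref{comput} and \ref{zero-antiinvariant} show that the generalized kernel $\mathbb{E}^X_{\ev_0,0}$ is the sum of two pieces. The first is $H^\bullet(X)^{\mathrm{prim}}$, of dimension $21$ and concentrated in degree four. The second is the three-dimensional space $E_0^{\mathrm{amb}}=\langle\alpha,\beta,\gamma\rangle$, whose leading cup-product parts are the classes $-h_1^2+h_1h_2-h_2^2$, $h_1h_2(h_2-h_1)$ and $h_1^2h_2^2$. These have degrees $(2,2)$, $(3,3)$ and $(4,4)$ respectively.

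After the Tate twist by $(-1)$ implicit in the atom formalism, these pieces contribute Hodge types $(0,0)$, $(1,1)$ and $(2,2)$, together with $H^4(X)^{\mathrm{prim}}(-1)$. By Laszlo's result recalled above, $H^4(X)^{\mathrm{prim}}\simeq H^2(\Sigma_i,\QQ)_{\mathrm{prim}}(-1)$ for $i=1,2$. Adding one $(1,1)$ class to the primitive part (the class from $\beta$, matching the polarization of $\Sigma_i$) and the classes of type $(0,0)$ and $(2,2)$ from $\alpha$ and $\gamma$ recovers $H^\bullet(\Sigma_i,\QQ)$ with its Hodge structure. This gives the second assertion.

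The dimension count is consistent: $1+22+1=24=21+3$, and the Hodge numbers $h^{0,2}=h^{2,0}=1$ match.

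\textbf{Step 2: indecomposability.} This is the main point. Suppose that for a general $\ev\colon\widehat{R}^*(X,K)_{(0)}\to S^*_K$ the $24$-dimensional kernel split into several eigenspaces. Exactly one of these pieces carries the transcendental $(3,1)$ part, so it satisfies $\nu\neq 0$, and its rank is strictly less than $24$.

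Now run the argument in the proof of Proposition \ref{prop_properties} on a rational Verra fourfold $X'$, which exists by Proposition \ref{prop_rational_Verra}. Since $X'$ fails $\coeur$, \cite[proof of Theorem 56]{jg} shows that a weak factorization to $\PP^4$ has a K3 blow-up center $\Sigma$. Moreover, for any evaluation the $24$-dimensional space $H^*(\Sigma)$ embeds into the unique eigenspace with $\nu\neq 0$. By deformation invariance of Gromov--Witten theory, together with $H^{(0)}(X)=H^{(0)}(X')$, the same eigenspace of $X$ has rank at least $24$. This contradicts the assumed splitting.

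\textbf{Main obstacle.} The delicate point is the bookkeeping in Step 2. The argument for $X$ must be run in the ring $\widehat{R}^*(X,K)_{(0)}$, not in the Hodge ring, so that the evaluation on $X$ transfers to $X'$. One must also check that the eigenvalue with $\nu\neq0$ is indeed unique, so that the $\Sigma$-summand on $X'$ lands in the same eigenspace as the transcendental part of $X$. Both points are handled exactly as in the proof of Proposition \ref{prop_properties}, so I expect them to go through.
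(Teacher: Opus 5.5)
Your proposal is correct and follows the paper's own argument. Indecomposability is deduced, as in the proof of Proposition~\ref{prop_properties}: you deform to a rational Verra fourfold and use that the $24$-dimensional cohomology of the K3 blow-up center lands in the single eigenspace with $\nu\neq 0$. This is the paper's remark that the quantum cohomology of a K3 surface has only one eigenvalue along any deformation. The Hodge-theoretic part likewise comes, as in the paper, from writing the kernel as $\QQ^{\oplus 3}\oplus H^4(X)^{\mathrm{prim}}$ and invoking Laszlo.

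One slip in your Step 1: no single Tate twist does what you claim. The leading ambient terms, of types $(2,2),(3,3),(4,4)$, need the twist $(2)$ to become $(0,0),(1,1),(2,2)$, whereas the primitive part needs the twist $(1)$, since $H^4(X)^{\mathrm{prim}}(1)\simeq H^2(\Sigma_i,\QQ)_{\mathrm{prim}}$; "$H^4(X)^{\mathrm{prim}}(-1)$" is wrong. So the identification holds only summand by summand up to Tate twist, that is, as Hochschild-graded structures in which the three ambient classes are simply copies of $\QQ$ in degree $0$. This is also the sense in which the paper's statement should be read, and there is no need to match $\beta$ with the polarization.
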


\begin{proof}
The first part directly follows from the proof of Proposition \ref{prop_properties}: by deformation invariance of GW invariants, one can assume the Verra to be rational, and then this generalized eigenspace is isomorphic to the corresponding one in the cohomology of a K3 surface. However, the quantum cohomology of a K3 has only one eigenvalue along any deformation, hence the first statement. For the second part, the result is just a consequence of the fact that all these rational Hodge structures are isomorphic to $\QQ^{\oplus 3}\oplus H^4(X)^{\mathrm{prim}}$. 
\end{proof}


\begin{coro}
Let $X$ be any Verra fourfold and $Y$ be either a very general cubic fourfold or a very general Gushel--Mukai fourfold.
Then $X$ is not birational to $Y$.
\end{coro}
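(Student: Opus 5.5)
We argue by contradiction, along the lines of \cite{kkpy, jg, bmp}: a birational map $X \dashrightarrow Y$ will be shown to force $Y$ to carry an atom whose rational Hodge structure is that of a K3 surface. Very general cubic and Gushel--Mukai fourfolds carry no such atom.

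Suppose $\phi\colon X\dashrightarrow Y$ is birational, and choose a weak factorization of $\phi$ into blow-ups and blow-downs along smooth centers. The centers have dimension at most two: points, curves and surfaces. By the atomic decomposition theorem of \cite{kkpy} (in the eigenvalue form of \cite[Theorem 56]{jg}), for every evaluation map $\ev$ the generalized eigenspaces of quantum multiplication by $c_1$ on the two ends, each enlarged by the contributions of the centers, match up in a way compatible with Hodge structures.

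By Corollary \ref{rem_irr_atom}, $X$ has an indecomposable atom of rank $24$, supported on the eigenvalue $0$. Its Hodge structure is $H^*(\Sigma_1,\QQ)$, and it contains a nonzero $(3,1)$-part coming from $H^4(X)^{\mathrm{prim}}$. This atom must appear on the other side of the factorization. Since it is indecomposable, it must occur as a whole either inside an atom of $Y$ or inside the atom of some blow-up center. A blow-up center carrying it would have to be a K3-type surface whose $H^2$ is Hodge-isomorphic to $H^2(\Sigma_1)$; that case produces no constraint on $Y$ and cancels against $X$. We must therefore control the transcendental parts: the $(3,1)$-class of $X$ either cancels against a center or lands in $H^4(Y)_{\mathrm{van}}$.

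Now take $Y$ to be a cubic or Gushel--Mukai fourfold. Its vanishing cohomology $H^4(Y)_{\mathrm{van}}$ has $h^{3,1}=1$, and by \cite{kkpy} for cubics and \cite{bmp} for Gushel--Mukai fourfolds it sits inside a single atom of $Y$. The weak factorization gives an isomorphism of sums of Hodge structures
$$H^*(X)\oplus\bigoplus_{\text{centers on } X\text{-side}}(\cdots)\;\simeq\; H^*(Y)\oplus\bigoplus_{\text{centers on } Y\text{-side}}(\cdots),$$
refined eigenvalue by eigenvalue. The transcendental piece $H^4(Y)_{\mathrm{van}}$, of weight four with $h^{3,1}=1$, can only be matched by:
\begin{enumerate}
\item a weight-two piece $H^2(S)(-1)$ of a surface center $S$ with $h^{2,0}(S)=1$;
\item the primitive part $H^4(X)^{\mathrm{prim}}\simeq H^2(\Sigma_1)(-1)$.
\end{enumerate}
The key step is to use the atom matching to show that the two transcendental atoms correspond, either to each other or through a K3 center. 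That correspondence yields a projective K3 surface $\Sigma$ (namely $\Sigma_1$, or a center) with $H^4(Y,\QQ)_{\mathrm{van}}\simeq H^2(\Sigma,\QQ)(-1)$, up to the algebraic parts.

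This is the main obstacle. One has to check that the atom of $Y$ containing $H^4(Y)_{\mathrm{van}}$ cannot be absorbed by a combination of several centers, or be split between the two sides. On the $Y$ side this is the indecomposability statement proved in \cite{kkpy, bmp}; on the $X$ side it is Corollary \ref{rem_irr_atom}. Once the two Hodge structures are identified, the remaining algebraic summands can be dropped: rational Hodge structures of K3 type are semisimple, so we can cancel them using the Mukai-type complement.

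Finally, for very general $Y$ the Hodge structure $H^4(Y)_{\mathrm{van}}$ has rank $22$ for cubics and $22$ for Gushel--Mukai fourfolds, and is irreducible with no Hodge classes. On the other hand, the Hodge structure $H^2(\Sigma,\QQ)$ of a projective K3 always contains an ample class, so its transcendental part has rank at most $21$. Hence no isomorphism $H^4(Y,\QQ)_{\mathrm{van}}\simeq H^2(\Sigma,\QQ)(-1)$ is possible, and $X$ is not birational to $Y$.
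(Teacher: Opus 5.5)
There is a genuine gap at the step you yourself call the key one. Summing the blow-up formula along the weak factorization, the atoms of $X$ together with those of the centers blown up on the way to $Y$ coincide with the atoms of $Y$ together with those of the centers blown down. So the piece of $Y$ carrying $H^{3,1}(Y)$ comes either from $X$ or from a blown-up center $S$. In the second case the atom of $X$ is absorbed by a blown-down center, so Corollary \ref{rem_irr_atom}, or any indecomposability statement, tells you nothing about $S$. All you know is that $S$ is a surface with $h^{2,0}(S)=1$. That a priori allows, e.g., properly elliptic surfaces or surfaces of general type with $p_g=1$. Yet you conclude that $S$ is a K3 surface with $H^2(S)(-1)\simeq H^4(Y)_{\mathrm{van}}$, and your final contradiction uses exactly that. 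Forcing a K3 here is the delicate point of Theorem \ref{K3}; the paper's sketch there relies on the finer invariants $\nu'=0$, $\gamma=1$ of the eigenspace of $Y$, which your argument never uses. Likewise, dropping the algebraic summands by semisimplicity of rational Hodge structures is not available as stated. The matching of eigenspaces is only an isomorphism of Hodge structures over the field $F$ of \cite{jg}, and descending it to $\QQ$ is the nontrivial step the paper borrows from \cite{jg}.

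For the corollary none of this is needed, which is why the paper does not go through Theorem \ref{K3}. It uses Property $\clubsuit$: at least three Hodge classes in every eigenspace with $\nu\neq 0$. The argument has three inputs:
\begin{enumerate}
\item By Proposition \ref{prop_properties}, $X$ satisfies $\clubsuit$ for every evaluation, not only a general one. This matters because the evaluation induced from $Y$ is special. It is proved by deforming to a rational Verra fourfold, whose eigenspace contains the whole cohomology of a K3 center.
\item A very general cubic or Gushel--Mukai fourfold fails $\clubsuit$ for the evaluation of \cite{jg} or \cite{bmp}.
\item Every possible center (point, curve, surface) satisfies $\clubsuit$.
\end{enumerate}
Hence a weak factorization from $X$ to $Y$ would force some center to fail $\clubsuit$, which is impossible. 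Your closing observation, that an ample class supplies one Hodge class too many, is precisely this count. Apply it to the $\nu\neq 0$ eigenspace of whichever piece ($X$ or an arbitrary surface center) accounts for the eigenspace of $Y$, rather than to a putative K3. That closes your argument without identifying any center and without cancelling summands.
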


\begin{proof}
The strategy is the same as for the irrationality of the very general cubic fourfold.
Assume that $X$ and $Y$ are birational and choose a weak factorisation between them. Since $X$ satisfies Property $\clubsuit_{\widehat{R}^*(X,K)_\Hdg}$, but $Y$ fails to satisfy Property $\clubsuit_{\widehat{R}^*(Y,K)_\Hdg}$, then there must be a blow-up center that fails to satisfy Property $\clubsuit$, which is impossible because candidates are either point, curves, or surfaces, and they all satisfy Property $\clubsuit$.
Consequently, $X$ and $Y$ are not birational.
\end{proof}

\begin{remark}
As a consequence, a very general Gushel--Mukai fourfold is not birational to a very general nodal Gushel--Mukai fourfold.
\end{remark}

\begin{theorem}\label{K3}
If $Y$ is a cubic fourfold or a Gushel-Mukai fourfold that is birational to some Verra fourfold $X$, then there exists a projective $K3$ surface $\Sigma$ and an isomorphism of rational Hodge structures
$$H^4(Y,\mathbb{Q})_\mathrm{\mathrm{van}} \simeq H^2(\Sigma,\mathbb{Q})(-1).$$
\end{theorem}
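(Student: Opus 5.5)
The plan is to run the atom comparison of Corollary \ref{rem_irr_atom} through a weak factorization of the birational map $X \dashrightarrow Y$. By Weak Factorization, $X$ and $Y$ are connected by a chain of blow-ups and blow-downs with smooth centers of dimension at most $2$: points, curves, and surfaces. By the blow-up formula in quantum cohomology (the atom decomposition of \cite{kkpy}, in the form of \cite{jg}), the generalized eigenspaces of quantum multiplication by $c_1$ on both ends then decompose into pieces coming from $X$ (respectively $Y$) and from the centers. The decomposition is compatible with rational Hodge structures up to Tate twists.

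First I would record the atoms on each side. For $X$, Corollary \ref{rem_irr_atom} gives an indecomposable atom of rank $24$ whose rational Hodge structure is $\QQ^{3}\oplus H^4(X)^{\mathrm{prim}}\cong H^*(\Sigma_i,\QQ)$ (suitably twisted). For $Y$ a smooth cubic or Gushel--Mukai fourfold, the computations recalled from \cite{kkpy,bmp,jg} give, for a general evaluation, a distinguished generalized eigenspace. It contains $H^4(Y,\QQ)_{\mathrm{van}}$, including its $(3,1)$ part, plus a few algebraic classes. All other eigenspaces are one-dimensional and spanned by algebraic classes.

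The key step is to match the atom of $Y$ carrying the $h^{3,1}$ class. This atom is unique on the $Y$ side, since it is the only one with $\nu\neq 0$. Through the weak factorization it must be identified with an atom on the $X$ side, either from $X$ itself or from a blow-up center. The blow-up centers contribute the cohomology of points and curves, which have no $(3,1)$-type piece after twisting, and of surfaces. A surface $S$ contributes $H^*(S)$ twisted by $(-1)$, and a $(3,1)$ class there comes from $H^{2,0}(S)$.

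I would split into two cases. If the matching atom is the $24$-dimensional atom of $X$, then $H^4(Y)_{\mathrm{van}}$ sits as a sub-Hodge structure in $H^*(\Sigma_1,\QQ)$ of the same transcendental type. Comparing transcendental parts and completing with algebraic classes gives the desired isomorphism with $H^2(\Sigma_1,\QQ)(-1)$. If instead it comes from a surface center $S$, the argument of \cite[Theorem 56]{jg} is what I would reuse: the atom with $\nu\neq0$ and $\nu'=0$ forces $S$ to be a K3 surface, up to birational modification. Its $H^2(S)(-1)$ then carries the transcendental part of $H^4(Y)_{\mathrm{van}}$, and one takes $\Sigma=S$.

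The main obstacle is the last identification: passing from an equality of transcendental parts to an isomorphism of the full vanishing cohomology $H^4(Y,\QQ)_{\mathrm{van}}$ with $H^2(\Sigma,\QQ)(-1)$. This requires matching the ranks of the algebraic parts. I would try to use that $\mathrm{rk}\,H^4(Y)_{\mathrm{van}}=22=b_2(\Sigma)$. Two rational Hodge structures of K3 type of equal rank with isomorphic transcendental parts are isomorphic, since the algebraic complements are trivial Hodge structures $\QQ(-2)^{\oplus r}$ of the same rank. The remaining work is then a rank check in each case.
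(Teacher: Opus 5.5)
Your outline follows the paper's proof. You use a weak factorization, single out the unique atom of $Y$ with $\nu\neq 0$, and then split into two cases: either it matches the $24$-dimensional Verra atom of Corollary~\ref{rem_irr_atom} (so $\Sigma=\Sigma_1$), or it matches a surface center whose minimal model is a K3, the latter quoted from \cite{jg}.

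The genuine difference is the final cancellation step. The paper first obtains an isomorphism of the whole atoms as Hodge structures over $F=\QQ((a^\QQ))$. It then observes that $\ev_r(\kappa_\tau)$ and $\ev_0(\kappa_\tau)$ are conjugate, hence both of rank one on the matched eigenspaces. It uses this, following \cite[Theorem 55]{jg}, to discard the two extra copies of $F$ on each side.

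You bypass this. You compare only transcendental parts, then conclude from $\dim H^4(Y,\QQ)_{\mathrm{van}}=22=b_2(\Sigma)$ together with the orthogonal splitting of a polarized Hodge structure of K3 type into its transcendental part plus $\QQ(-2)^{\oplus r}$. This is correct and more elementary. It is also insensitive to extra algebraic summands in the matching, so you need neither $\gamma^{X_r}_{\ev_r,\alpha_r}=1$ nor the rank-one input. The paper's route, by contrast, keeps track of the full atom and pins down which two classes are cancelled.

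To make your version complete, state three things explicitly:
\begin{enumerate}
\item How you pass from the isomorphism over $F$ supplied by the atom formalism to an isomorphism of rational Hodge structures of the transcendental parts. For instance: $\mathrm{Hom}$ of Hodge structures commutes with the extension $\QQ\subset F$, and a determinant that is nonzero at an $F$-point is nonzero at some $\QQ$-point.
\item That, as in the paper, the evaluations on the blow-up centers in the direction $\leftarrow$ are chosen general, so that no other class of type $(3,1)$ shares the eigenvalue $\alpha_r$.
\item That in the surface-center case you take $\Sigma$ to be the minimal K3 model of $S$ rather than $S$ itself, since your rank count needs $b_2(\Sigma)=22$.
\end{enumerate}
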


\begin{proof}
Consider a weak factorisation from $X$ to $Y$
$$X:=X_0 \leftrightarrow X_1 \leftrightarrow \dotsc, \leftrightarrow X_{r-1} \leftrightarrow X_r=:Y.$$

Consider the evaluation map used in the proof of \cite[Proposition 55]{jg} if $Y$ is a cubic fourfold,
or the evaluation map in the last remark of \cite{bmp} if $Y$ is a Gushel-Mukai. We denote it by $\ev_r$.

There is an eigenvalue $\alpha_r$ such that
$$\nu^{X_r}_{\ev_r, \alpha_r}\neq 0,~~{\nu'}^{X_r}_{\ev_r, \alpha_r}=0,~~\textrm{and}~~\gamma^{X_r}_{\ev_r, \alpha_r}=1.$$
We first show that the generalized eigenspace $\mathbb{E}^{X_r}_{\ev_r,\alpha_r}$ is isomorphic, as a Hodge structure over the field $F=\QQ((a^\QQ))$ (recall \cite[section 2]{jg}), to the full cohomology of a $K3$ surface.
By choosing general evaluation maps for the blow-up centers in the direction $\leftarrow$, we deduce evaluation maps $\ev_k$ on each $X_k$ (as well as evaluation maps for the blow-up centers in the direction $\rightarrow$).
Indeed, there are two possibilities:
\begin{enumerate}
    \item there exists an eigenvalue $\alpha_0$ such that $\mathbb{E}^{X_0}_{\ev_0,\alpha_0} \simeq \mathbb{E}^{X_r}_{\ev_r,\alpha_r}$, in which case $\mathbb{E}^{X_0}_{\ev_0,\alpha_0}$ is precisely the `irreducible atom' of $X$ that is isomorphic to the rational Hodge structure (tensored with $F$) of $\Sigma_1$ (or equivalently $\Sigma_2$),
    \item there exists an index $1 \leq k \leq r$ such that the blow-up center $Z_k \subset X_{k-1}$ of $X_{k-1} \leftarrow X_k$ contains a generalized eigenspace isomorphic to $\mathbb{E}^{X_r}_{\ev_r,\alpha_r}$, in which case $Z_k$ should be a surface whose minimal model is a $K3$ surface and this generalized eigenspace is precisely isomorphic to the rational Hodge structure (tensored with $F$) of that $K3$ surface.
\end{enumerate}
Then we observe that, e.g.~in the first case (and similarly in the second case), since the matrices $\ev_r(\kappa_\tau)$ and $\ev_0(\kappa_\tau)$ are conjugate to each other, then their ranks on $\mathbb{E}^{X_r}_{\ev_r,\alpha_r} \simeq \mathbb{E}^{X_0}_{\ev_0,\alpha_0}$ are equal, and thus both equal to one.
Therefore, using the same method as in the proof of \cite[Theorem 55]{jg}, we can get rid of the two copies of $F$ in $\mathbb{E}^{X_r}_{\ev_r,\alpha_r}$ and in $H^\bullet(K3,\mathbb{Q})_F$. Eventually, we conclude that
$$H^4(Y,\mathbb{Q})_\mathrm{van} \simeq H^2(\Sigma,\mathbb{Q})(-1), $$
for some K3 surface $\Sigma$, as claimed.
\end{proof}

\begin{remark}
If a suitable theory of atoms over $\ZZ$ was available, we should be able to improve the previous theorem and upgrade it to a statement about $\ZZ$-Hodge structures. Then, conditional to the Hassett conjecture, we would conclude that Verra fourfolds and cubic fourfolds (or Gushel-Mukai fourfolds) are birational only when they are both rational.
\end{remark}

\section{A first order deformation of the small quantum product}

In order to really understand the Hodge atoms of our Verra fourfold, we would need to control the behaviour of the (generalized) eigenspace $E_0$ along a general deformation of the small quantum product inside the big quantum cohomology ring.  
We will restrict ourselves tp the case of an infinitesimal deformation 
along  a class $\tau$, which will be enough to prove our second main result in the next subsection. In other words, we consider $(H^\bullet(X)[q_1,q_2][[t]], \star_t)$, where:
$$
\sigma_a\star_t \sigma_b=\sigma_a\cup \sigma_b + \sum_{\sigma_c,d_1,d_2,e}\langle \sigma_a,\sigma_b,\sigma_c^\vee,\tau^e \rangle_{d_1,d_2} \frac{q_1^{d_1}q_2^{d_2} t^e}{e!} \sigma_c.
$$
If $\tau=\sum_i \tau_i$ is a decomposition of $\tau$ in homogeneous elements, the Euler field in this deformation is  $$\mathrm{Eu}_{t\tau}=2h_1+2h_2+\sum_i\Big(1 - \frac{\deg(\tau_i)}{2}\Big)t\tau_i.$$
We compute  the matrix of the multiplication operator by $\mathrm{Eu}_{t \tau}$, at first order in $t$. 

\begin{lemma}
\label{lem_comp_def}
Suppose $\tau$ homogeneous. For $u\in H^\bullet(X)$ and  $d_1,d_2\geq 1$, the coefficient of $q_1^{d_1}q_2^{d_2} t^k$ in $\mathrm{Eu}_{t\tau}\star_t u$ is 
$$\frac{1}{k!}
 \Big( 2d_1+2d_2+1-\frac{\deg(\tau)}{2}\Big)
 \sum_{\sigma_a}\langle  u,\tau^k , \sigma_a^\vee \rangle_{d_1,d_2} \sigma_a.
$$
\end{lemma}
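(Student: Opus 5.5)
The plan is to split $\mathrm{Eu}_{t\tau}$ into its two summands and use the definition of $\star_t$ together with the Divisor Axiom. By bilinearity,
$$\mathrm{Eu}_{t\tau}\star_t u=(2h_1+2h_2)\star_t u+\Big(1-\frac{\deg(\tau)}{2}\Big)\,t\,(\tau\star_t u),$$
and I will extract the coefficient of $q_1^{d_1}q_2^{d_2}t^k$ in each summand separately. The section announces that we work at first order in $t$, so the relevant exponent is $k=1$. I will first derive the general-$k$ coefficient and then specialise to $k=1$.

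For the first summand, the definition of $\star_t$ gives the coefficient
$$\frac1{k!}\sum_{\sigma_a}\langle 2h_1+2h_2,u,\sigma_a^\vee,\tau^k\rangle_{d_1,d_2}\,\sigma_a .$$
Since $(d_1,d_2)\neq(0,0)$, the Divisor Axiom applies. It removes the insertion $2h_1+2h_2$ at the cost of the factor $\int_{(d_1,d_2)}(2h_1+2h_2)=2d_1+2d_2$. Here I use that $h_1\cdot h_1^\vee=1$ and $h_1\cdot h_2^\vee=0$, so that $h_i$ integrates to $d_i$ on a class of bidegree $(d_1,d_2)$. The result is
$$\frac{2d_1+2d_2}{k!}\sum_{\sigma_a}\langle u,\tau^k,\sigma_a^\vee\rangle_{d_1,d_2}\sigma_a .$$

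For the second summand, the factor $t$ shifts exponents by one, so I need the $t^{k-1}$ coefficient of $\tau\star_t u$. This coefficient is $\frac1{(k-1)!}\sum\langle\tau,u,\sigma_a^\vee,\tau^{k-1}\rangle_{d_1,d_2}\sigma_a$, which equals $\frac{k}{k!}\sum\langle u,\tau^k,\sigma_a^\vee\rangle_{d_1,d_2}\sigma_a$ because the extra $\tau$ simply joins the other $k-1$ copies of $\tau$. The total coefficient is therefore
$$\frac1{k!}\Big(2d_1+2d_2+k\Big(1-\frac{\deg(\tau)}2\Big)\Big)\sum_{\sigma_a}\langle u,\tau^k,\sigma_a^\vee\rangle_{d_1,d_2}\sigma_a .$$

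In the first-order regime $k=1$ fixed at the start of the section, the factor $k(1-\deg(\tau)/2)$ becomes $1-\deg(\tau)/2$. This gives exactly the expression in the statement. I will state explicitly that this is the sense in which the lemma is proved and used: the $t$-linear part of the matrix of $\mathrm{Eu}_{t\tau}\star_t$. For other values of $k$ the correct prefactor is the $k$-dependent one displayed above. The argument does not require $\tau$ to be a divisor or to be invariant, and homogeneity of $\tau$ is used only so that $\mathrm{Eu}_{t\tau}$ has the single term $(1-\deg(\tau)/2)t\tau$.

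The main point needing care is bookkeeping rather than geometry. First, one must apply the Divisor Axiom only in classes with $(d_1,d_2)\neq(0,0)$; this is guaranteed by the hypothesis $d_1,d_2\ge1$, which also excludes the classical cup-product terms. Second, one must match the factorials correctly, using $\frac{1}{(k-1)!}=\frac{k}{k!}$, which is what produces the coefficient $1-\deg(\tau)/2$ when $k=1$.
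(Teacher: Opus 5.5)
Your proof is correct and follows the paper's own argument: expand $\mathrm{Eu}_{t\tau}\star_t u$ using the definition of $\star_t$, then remove the insertion $2h_1+2h_2$ with the Divisor Axiom, written out in full. Your bookkeeping remark is also right: in general the prefactor is $2d_1+2d_2+k(1-\deg(\tau)/2)$, so the displayed formula holds literally only for $k=1$ (or when $\deg(\tau)=2$), which is exactly the first-order case the paper uses and a point its one-line proof passes over.
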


\begin{proof}
This follows from the definition and the divisor axiom. 
\end{proof}

\medskip
Consider the deformation of the quantum cohomology along the direction of   $$\tau=a_{11}h_1^2 +a_{12}h_1h_2+ a_{22}h_2^2+a_{112}h_1^2h_2+a_{122}h_1h_2^2+a_{1122}h_1^2h_2^2.$$  Then, along such a 
deformation, the quantum multiplication by $\mathrm{Eu}_{t\tau}$ at first order in $t$ is given by the following matrix (that we divided into four submatrices made of columns respectively 1,2,3, resp. 4,5 resp. 6,7, resp. 8,9):

{\small

$$
\begin{pmatrix}
0&4q_1(1+4q_2(a_{112}+a_{122})t)&4q_2(1+4q_2(a_{112}+a_{122})t)
& \dots \\
2&2q_1(a_{11}+4q_2a_{1122})t&2q_2(a_{12}+2a_{22}+4q_1a_{1122})t
& \dots \\
2&2q_1(2a_{11}+a_{12}+4q_2a_{1122})t& 2q_2(a_{22}+4a_{1122})t
& \dots \\
-a_{11}t&2&0
& \dots \\
-a_{12}t &2&2
& \dots \\
-a_{22}t&0&2
& \dots \\
-2a_{112}t&-a_{12}t&-(a_{11}+2a_{1122})t
& \dots \\
-2a_{122}t&-(a_{22}+2q_1a_{1122})t&-a_{12}t
& \dots \\
-3a_{1122}t&-2a_{122}t&-2a_{112}t
&  \dots 
\end{pmatrix}$$

$$\begin{pmatrix}
 \dots & 12q_1^2(a_{11}+8q_2a_{1122})t+24q_1q_2(a_{12}+a_{22})t &12q_1q_2(2a_{11}+3a_{12}+2a_{22}+4q_0a_{1122})t
 &  \dots \\
 \dots &4q_1(1+q_2(4a_{112}+8a_{122})t)&4q_2(1+q_1(10a_{112}+8a_{122})t)
 & \dots \\
 \dots &8q_1(1+q_1a_{112}t+2q_2a_{122}t)&4q_1(1+q_2(8a_{112}+10a_{122})t)
 &  \dots \\
 \dots &8q_1q_2a_{1122}t&2q_2(a_{12}+2a_{22}+4q_1a_{1122})t
 & \dots \\
 \dots &2q_1(2a_{11}+a_{12}+4q_2a_{1122})t&2q_1a_{11}t+2q_2a_{22}t+12q_1q_2a_{1122}t
 &  \dots \\
 \dots &4q_1(a_{12}+q_1a_{1122})t&2q_1(2a_{11}+a_{12}+4q_2a_{1122})t
 & \dots \\
 \dots &2&2
 & \dots \\
 \dots &0&2
 & \dots \\
 \dots &-a_{22}t&-a_{12}t
 &  \dots 
\end{pmatrix}$$

$$
\begin{pmatrix}
 \dots & 24q_1q_2(a_{11}+a_{12})t+12q_2^2(a_{22}+8a_{1122})t
 &32q_1q_2(1+5q_1a_{112}t+2q_0a_{122}t) &
 \dots \\
 \dots &8q_2(1+2q_1a_{112}t+q_2a_{122}t) & 12q_1q_2(2a_{11}+5a_{12}+2a_{22}+4q_0a_{1122})t 
 & 
 \dots \\
 \dots &4q_2(1+q_1(8a_{112}+4a_{112})t) & 12q_1^2(a_{11}+10q_2a_{1122})t+48q_1q_2(a_{12}+a_{22})t  & 
 \dots \\
 \dots  &4q_2(a_{12}+q_2a_{1122})t &4q_2(1+q_1(8a_{112}+4a_{122})t)&
 \dots \\
 \dots &2q_2(a_{12}+2a_{22}+4q_1a_{1122})t &4q_1(1+q_2(8a_{112}+10a_{122})t)& 
 \dots \\
 \dots &8q_1q_2a_{1122}t &8q_1(1+q_1a_{112}t+2q_2a_{122}t)& 
 \dots \\
 \dots &0 & 2q_2(a_{22}+4q_1a_{1122})t  & 
 \dots \\
 \dots &2 & 2q_1(2a_{11}+a_{12}+4q_2a_{1122})t  &
 \dots \\
 \dots &-a_{11}t &2& \dots 
\end{pmatrix}.$$

$$
\begin{pmatrix} 
 \dots &32q_1q_2(1+5q_2a_{122}t+2q_0a_{112}t)
  & 80q_1q_2(2q_1a_{11}+2q_2a_{22}+q_0a_{12}+9q_1q_2a_{1122})t \\
\dots & 48q_1q_2(a_{11}+a_{12})t+12q_2^2(a_{22}+10q_1a_{1122})t
 &32q_1q_2(1+2q_0a_{112}t+5q_2a_{122}t) \\
 \dots & 12q_1q_2(2a_{11}+5a_{12}+2a_{22}+4q_0a_{1122})t
 &32q_1q_2(1+5q_1a_{112}t+2q_0a_{122}t) \\
 \dots &8q_2(1+2q_1a_{112}t+q_2a_{122}t)
 & 24q_1q_2(a_{11}+a_{12})t+12q_2^2(a_{22}+8q_1a_{1122})t \\
 \dots &4q_2(1+q_1(10a_{112}+8a_{122})t)
 & 12q_1q_2(2a_{11}+5a_{12}+2a_{22}+4q_0a_{1122})t \\
\dots &4q_1(1+q_2(4a_{112}+8a_{122})t)
 & 12q_1^2(a_{11}+8q_2a_{1122})t+24q_1q_2(a_{12}+a_{22})t \\
 \dots & 2q_2(a_{12}+2a_{22}+4q_1a_{1122})t
 & 4q_2(1+4q_1(a_{112}+a_{122})t) \\
 \dots  & 2q_1(a_{11}+4q_2a_{1122})t
 &4q_1(1+4q_2(a_{112}+a_{122})t) \\
 \dots 
 &2& 0
\end{pmatrix}.$$
} \ 

\begin{prop}\label{deformation}
There exist deformations $\alpha(t), \beta(t), \gamma(t)$ of $\alpha,\beta,\gamma$ such that, at first order in $t$, $E_0^{\mathrm{amb}}(t):=\langle \alpha(t), \beta(t), \gamma(t) \rangle$ is preserved by $\mathrm{Eu}_{t\tau}\star_t$, with an action given in this basis by the matrix \vspace*{-3mm}
$$
t\begin{pmatrix}
 \lambda& 0 & 0\\
0 & \lambda  & 0\\
\mu & 0 &  \lambda
\end{pmatrix}, 
$$ \vspace*{-3mm}
$$ \mathit{where} \qquad\lambda= -2q_1a_{11}-2q_2a_{22}-4q_1q_2a_{1122}\quad \mathit{and} \quad \mu=a_{11}-a_{12}+a_{22}-2(q_1+q_2)a_{1122}.$$
\end{prop}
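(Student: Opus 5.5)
This is first-order perturbation theory of a kernel. Write $M(t)=M_0+tM_1+O(t^2)$ for the matrix of $\mathrm{Eu}_{t\tau}\star_t$ on the ambient cohomology. Here $M_0=M_{c_1(X)}$ is the matrix of Section 3.2, and $M_1$ is the $t$-linear part of the matrix displayed above, obtained from Lemma \ref{lem_comp_def} and the quantum multiplication tables $M_{h_1^2},\dots,M_{h_1^2h_2^2}$. Since everything is linear in $\tau$, I will treat each monomial direction $a_{11},a_{12},\dots,a_{1122}$ separately and add the results at the end.

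We look for $\alpha(t)=\alpha+t\alpha_1$, and similarly $\beta(t)$ and $\gamma(t)$, together with a $3\times3$ matrix $N$, such that
$$M(t)\,(\alpha(t),\beta(t),\gamma(t))=(\alpha(t),\beta(t),\gamma(t))\,tN \pmod{t^2}.$$
At order $t^0$ this reduces to $M_0E_0^{\mathrm{amb}}=0$, which is Proposition \ref{comput}. At order $t^1$ we need
$$M_0v_1+M_1v=\sum_j N_{jv}\,e_j,$$
where $v$ runs over $\alpha,\beta,\gamma$ and $e_j$ over the same basis. By Proposition \ref{comput}, $0$ is a simple root of the characteristic polynomial restricted to the complement of the other six simple eigenvalues, so the ambient space splits as
$$H^\bullet(X)^{\mathrm{amb}}=E_0^{\mathrm{amb}}\oplus\operatorname{Im}M_0,$$
with $\operatorname{Im}M_0$ the sum of the nonzero eigenspaces. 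Hence the equation is solvable exactly when $N$ is the $E_0^{\mathrm{amb}}$-component of $M_1v$. So $N$ is the compression $PM_1|_{E_0^{\mathrm{amb}}}$, where $P$ is the spectral projection onto $E_0^{\mathrm{amb}}$ along $\operatorname{Im}M_0$, and $v_1$ is then determined modulo $E_0^{\mathrm{amb}}$ by inverting $M_0$ on $\operatorname{Im}M_0$.

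The main task is therefore computational: compute $M_1\alpha$, $M_1\beta$ and $M_1\gamma$ from the matrix above, and then project onto $E_0^{\mathrm{amb}}$. To do the projection without computing $P$ explicitly, I would use the left kernel of $M_0$. With respect to the Poincaré pairing, $M_0$ is self-adjoint, since quantum multiplication is Frobenius. So the left kernel is spanned by the Poincaré duals of $\alpha,\beta,\gamma$, and the coefficients $N_{jv}$ are obtained by pairing $M_1v$ against the dual basis of $E_0^{\mathrm{amb}}$ under the (nondegenerate, by semisimplicity elsewhere) restricted Poincaré pairing. The Gram matrix of $\alpha,\beta,\gamma$ is easy to compute. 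Using $h_1^2h_2^2=2\,\pt$ and degree considerations, $\alpha$ and $\gamma$ pair nontrivially, $\beta$ pairs only with itself, and $\alpha$ and $\gamma$ are orthogonal to $\beta$. This already explains the expected block shape of $N$: there are no entries linking $\beta$ to $\alpha$ or $\gamma$.

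Finally, I will check directly that the diagonal entries all equal
$$\lambda=-2q_1a_{11}-2q_2a_{22}-4q_1q_2a_{1122},$$
and that the only off-diagonal entry is the coefficient $\mu$ of $\gamma$ in the image of $\alpha$. A consistency check is available: the trace of $N$ must equal the first-order variation of the sum of the eigenvalues near $0$. That variation equals the first-order change of $\operatorname{tr}M(t)$ minus the first-order shifts of the six nonzero eigenvalues, and the trace of $M_1$ is readable from the diagonal of the displayed matrix.

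The main obstacle is the bulk bookkeeping of the Gram-pairing computation with the long $q$-dependent entries. I would first run it with only $a_{11}$ nonzero (and use the $h_1\leftrightarrow h_2$ symmetry to get $a_{22}$), then $a_{12}$, then $a_{1122}$. The classes $a_{112}$ and $a_{122}$ of degree $6$ should drop out, and checking that they contribute nothing to $N$ is a good test of the computation.
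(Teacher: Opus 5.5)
Your argument is correct in outline, but it takes a different route from the paper. The paper solves the first-order system $M_0v_1+M_1v=\sum_j N_{jv}e_j$ by computer and records explicit corrections $u_\bullet,v_\bullet,w_\bullet$, polynomial in $q$. Its proof is therefore a certificate checked by substitution. You instead observe that $N$ is forced to be the compression $PM_1|_{E_0^{\mathrm{amb}}}$. Existence of the corrections then comes for free from $H^\bullet(X)^{\mathrm{amb}}=\ker M_0\oplus\operatorname{Im}M_0$. You read off $N$ by pairing $M_1\alpha,M_1\beta,M_1\gamma$ against $\alpha,\beta,\gamma$, using the Gram matrix $G=\bigl(\begin{smallmatrix}6&0&8q_0\\0&-8q_0&0\\8q_0&0&0\end{smallmatrix}\bigr)$ with $q_0=q_1+q_2$.

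What your route buys:
\begin{itemize}
\item It is much lighter than solving a $9\times 3$ system.
\item It shows $N$ does not depend on the choice of corrections.
\item It explains where $\mu$ sits. Since $M_1$ is self-adjoint (Frobenius property of $\star_t$), $GN$ must be symmetric. This holds for the stated matrix but fails if $\mu$ is placed in the $(1,3)$ slot.
\end{itemize}
You lose the explicit $\alpha(t),\beta(t),\gamma(t)$, and your corrections may acquire denominators in $q_0$. The statement only asks for existence, though. The recipe does return $\lambda=-2q_1$ and $\mu=1$ in the $a_{11}$ direction.

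Some justifications should be fixed.
\begin{itemize}
\item The splitting holds because $0$ has algebraic multiplicity $9-6=3$, equal to $\dim\ker M_0$. It is not a ``simple root''.
\item $G$ is nondegenerate because $\operatorname{Im}M_0=(\ker M_0)^\perp$, together with the splitting; alternatively, $\det G=512q_0^3$. Semisimplicity elsewhere is not the reason.
\item The orthogonality of $\beta$ to $\alpha,\gamma$ does not explain the block shape of $N$. For $\deg\tau\in\{4,8\}$, it is the grading that kills the entries linking $\beta$ to $\alpha,\gamma$: since $\deg q_i=4$, entries of $N$ have degree in $4\ZZ$. For $\deg\tau=6$, the same grading forces $N$ to be purely off-block. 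So the vanishing of the $a_{112},a_{122}$ contributions is a genuine computation, not a consequence of $G$.
\item Build $M_1$ from Lemma \ref{lem_comp_def} rather than from the displayed matrix, which has misprints. For example, $2q_2(a_{22}+4a_{1122})t$ should read $2q_2(a_{22}+4q_1a_{1122})t$.
\end{itemize}

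Entrywise, $(M_1)_{ab}=\bigl(1+\tfrac{\deg\sigma_b-\deg\sigma_a}{2}\bigr)(M_\tau)_{ab}$. Equivalently, $M_1=M_\tau+\tfrac12[M_\tau,\mathrm{Gr}]$, where $\mathrm{Gr}$ multiplies a class by its cohomological degree. Since $M_\tau$ commutes with $M_0$, this gives $N=T+\tfrac12[T,P\,\mathrm{Gr}|_{E_0^{\mathrm{amb}}}]$ with $T=M_\tau|_{E_0^{\mathrm{amb}}}$. That shortens your bookkeeping considerably.
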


\begin{proof}
We just need to solve a linear system. We report the values of $\alpha(t), \beta(t), \gamma(t)$ which can be plugged into the matrix of the quantum multiplication by $\mathrm{Eu}_{t\tau}$, at first order in $t$, in order to obtain the result:
$$\begin{array}{rcl}
\alpha(t) &=& \alpha+t( a_{11}u_{11}+a_{12}u_{12}+a_{22}u_{22}+a_{112}u_{112}+a_{122}u_{122}+a_{1122}u_{1122})+O(t^2), \\
\beta(t) &=& \beta+t(a_{11}v_{11}+a_{12}v_{12}+a_{22}v_{22}+a_{112}v_{112}+a_{122}v_{122}+a_{1122}v_{1122})+O(t^2), \\
\gamma(t) &=& \gamma+ t(a_{11}w_{11}+a_{12}w_{12}+a_{22}w_{22}+a_{112}w_{112}+a_{122}w_{122}+a_{1122}w_{1122})+O(t^2), 
\end{array}$$

\noindent for the following classes: \small

$$
\begin{array}{rcl}
u_{11}=2q_1(h_1-h_2) ,\quad & \quad  u_{12}=2q_2h_1+2q_1h_2, \quad & \quad  u_{22}=2q_2(h_2-h_1),
\end{array}$$
$$\begin{array}{rcl}
&  u_{1122}=(-2q_2^2-2q_1q_2)h_1+(-2q_1^2-2q_1q_2)h_2+(q_1+q_2)(h_1^2h_2+h_1h_2^2), 
\end{array}$$
$$\begin{array}{rcl}
u_{112}=-4q_1(2q_2+q_1)+2(q_1+q_2)h_1^2 ,\quad & \quad  u_{122}=-4q_2(2q_1+q_2)+2(q_1+q_2)h_2^2;
\end{array}$$

$$\begin{array}{rcl}
v_{11}=-4q_1(2q_2+q_1)+2q_1h_1^2
,\quad & \quad  v_{12}=-2q_2h_1^2+2q_1h_2^2
,\quad & \quad v_{22}=+4q_2(2q_1+q_2)-2q_2h_2^2,
\end{array}$$
$$\begin{array}{rcl}
&v_{1122}=4q_1q_2(2q_1-2q_2-4h_1^2+4h_2^2),
\end{array}$$
$$\begin{array}{rcl}
v_{112}=4q_1q_2h_1-4q_1(2q_2+q_1)h_2 + 2q_1h_1^2h_2 ,\quad & \quad  v_{122}=-4q_1q_2h_2+4q_2(2q_1+q_2)h_1 - 2q_2h_2^2h_1;
\end{array}$$

$$\begin{array}{rcl}
w_{11}=2q_1(-4q_2h_1+2q_1h_2-h_1^2h_2) ,\quad &    w_{12}=-8q_1q_2(h_1+h_2) ,\quad &  w_{22}=2q_2(-4q_1h_2+2q_2h_1-h_2^2h_1), 
\end{array}$$
$$\begin{array}{rcl}
&  w_{1122}=-8q_1q_2^2h_1-8q_1^2q_2h_2-4q_1q_2(h_1^2h_2+h_1h_2^2), 
\end{array}$$
$$\begin{array}{rcl}
w_{112}=4q_1q_2(4q_2+2q_1-3h_1^2-2h_2^2) ,\quad & \quad  w_{122}=4q_1q_2(4q_1+2q_2-3h_2^2-2h_1^2).
\end{array}
$$

\normalsize\medskip\noindent
We give the Macaulay2 script used to get these formulas in the Appendix.
\end{proof}

\begin{remark} 
Since all classes in $H^\bullet(X)^{\mathrm{amb}}$ are multiples of the hyperplane classes, and since we have already obtained the full quantum Chevalley formula, an application of the divisor axiom and associativity of the quantum multiplication allows in principle to recover the full big ambient quantum multiplication along the deformation $\tau$. However, the actual computation of this multiplication at any order in $t$ seems out of reach. 
\end{remark}


\begin{lemma}
\label{lem_mult_id_antiinv}
The restriction of the multiplication by the Euler field to $H^\bullet(X)^{\mathrm{prim}}$ is equal to $\tilde{\lambda}(t)\mathrm{id}$, where
 $\tilde{\lambda}(t)=t\lambda+O(t^2)$.
\end{lemma}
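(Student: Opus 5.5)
The plan is to show first that $\mathrm{Eu}_{t\tau}\star_t$ preserves $H^\bullet(X)^{\mathrm{prim}}$ and acts on it by a scalar, using monodromy. Then I will identify the first-order coefficient of that scalar from the small quantum products already computed, via associativity and Lemma \ref{lem_comp_def}.

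\emph{Step 1: preservation and scalarity.} The monodromy group of Verra fourfolds fixes $H^\bullet(X)^{\mathrm{amb}}$ pointwise, hence fixes $\tau$ and $\mathrm{Eu}_{t\tau}$. By \cite[Theorem 1.6]{LeePirola} it acts irreducibly on $H^\bullet(X)^{\mathrm{prim}}$. As in the proof of Proposition \ref{zero-antiinvariant}, deformation invariance shows that every Gromov--Witten invariant with exactly one primitive insertion vanishes. Hence for $u$ primitive, $\mathrm{Eu}_{t\tau}\star_t u$ has no component along ambient classes (the classical term $\tau\cup u$ is handled by Step 2), so $H^\bullet(X)^{\mathrm{prim}}$ is preserved. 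The restricted operator commutes with monodromy, since the invariants are monodromy invariant. It is also self-adjoint for the Poincar\'e pairing, by the Frobenius property of $\star_t$, and this pairing is definite up to sign on the real primitive part. Each eigenspace of this self-adjoint operator is monodromy stable, so irreducibility forces a single eigenvalue, and the restriction is $\tilde\lambda(t)\,\mathrm{id}$. Working with self-adjointness rather than Schur's lemma avoids having to know absolute irreducibility.

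\emph{Step 2: the small products on primitive classes.} For $u$ primitive, $h_i\cup u\in H^6(X)$, which is entirely ambient. Its pairing with ambient classes vanishes, so $h_i\cup u=0$. The quantum corrections to $h_i\star u$ also vanish by Step 1, so $h_i\star u=0$. Associativity with the formulas $h_1\star h_1=h_1^2+2q_1$, $h_1\star h_2=h_1h_2$ and $h_1^2\star h_2^2=h_1^2h_2^2+8q_1q_2$ (using $L=M=N=S=0$, $T=8$) then gives
$$h_1^2\star u=-2q_1u,\quad h_2^2\star u=-2q_2u,\quad h_1h_2\star u=h_1^2h_2\star u=h_1h_2^2\star u=0,\quad h_1^2h_2^2\star u=-8q_1q_2u+\dots$$
For the last one, $h_1^2h_2^2\star u=(h_1^2\star h_2^2)\star u-8q_1q_2u=4q_1q_2u-8q_1q_2u=-4q_1q_2u$. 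All classical parts $\tau_i\cup u$ vanish, because they land in degree $8$, or in degree $6$ and $4$ where they are orthogonal to everything. In every case the product is a pure quantum term of a single bidegree: $(1,0)$, $(0,1)$ or $(1,1)$.

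\emph{Step 3: first-order coefficient.} The $t^0$ term vanishes by Proposition \ref{zero-antiinvariant}. By Lemma \ref{lem_comp_def} with $k=1$, the $t$-coefficient in bidegree $(d_1,d_2)$ is $\bigl(2d_1+2d_2+1-\deg(\tau)/2\bigr)$ times the corresponding term of $\tau\star u$. For $\tau=h_i^2$ (real degree $4$, bidegree $(1,0)$ or $(0,1)$) the factor is $1$. For $\tau=h_1^2h_2^2$ (degree $8$, bidegree $(1,1)$) the factor is $4+1-4=1$. The remaining classes contribute nothing by Step 2. Summing gives $-2q_1a_{11}-2q_2a_{22}-4q_1q_2a_{1122}=\lambda$.

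The main point requiring care is Step 1: checking that the Lee--Pirola irreducibility, combined with self-adjointness, really forces a scalar; and verifying that the degree bookkeeping in Lemma \ref{lem_comp_def} correctly absorbs the classical term.
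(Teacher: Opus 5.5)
\textbf{Gap in Step~1.} The Poincar\'e pairing is \emph{not} definite on $H^4(X,\mathbb{R})^{\mathrm{prim}}$. Since $h^{3,1}(X)=1$, the Hodge--Riemann relations make it negative definite on the real plane $(H^{3,1}\oplus H^{1,3})_{\mathbb{R}}$ and positive definite on the $19$-dimensional primitive $(2,2)$-part. Its signature is therefore $(19,2)$, the same up to overall sign as on the primitive cohomology of a K3 surface. For an indefinite form, a self-adjoint operator can have non-real eigenvalues or non-trivial Jordan blocks. So you get no decomposition into monodromy-stable real eigenspaces, and scalarity does not follow.

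The detour would not have spared you absolute irreducibility anyway. If the monodromy were irreducible only over $\QQ$, a rational operator commuting with it and self-adjoint for a \emph{definite} invariant form could still have eigenvalues $\pm\sqrt{2}$. Its real eigenspaces would be monodromy-stable but not defined over $\QQ$; for example, take $\ZZ/8$ acting on $\QQ(\zeta_8)$ and multiplication by $\zeta_8+\zeta_8^{-1}$. The operator also has entries in $\QQ[a_\bullet,q_1,q_2][[t]]$, so any eigenspace argument must be run coefficientwise.

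\textbf{The fix.} What is needed is irreducibility over $\CC$; the paper's ``one eigenvector, hence all'' step tacitly uses this. It holds here, for instance because the monodromy is generated by Picard--Lefschetz reflections in vanishing cycles that form a single orbit and span $H^4(X,\QQ)^{\mathrm{prim}}$. A stable complex subspace then either contains a vanishing cycle, hence all of them, or is orthogonal to all of them, hence is zero. Expand the restricted operator in monomials in $q_1,q_2,t,a_\bullet$. Each coefficient is a rational matrix commuting with monodromy, hence a scalar by Schur's lemma. This gives $\tilde\lambda(t)\,\mathrm{id}$ to all orders.

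\textbf{What is sound.} The rest of your proposal is correct and goes beyond the paper, whose proof never actually derives the value $t\lambda$. Stability of the primitive part only uses the absence of monodromy invariants. From $h_i\star u=0$ and associativity you get $h_1^2\star u=-2q_1u$, $h_2^2\star u=-2q_2u$, $h_1^2h_2^2\star u=-4q_1q_2u$, and $0$ for the other monomials. (Correct the stray ``$-8q_1q_2u+\dots$'' in your display.) The weight in Lemma~\ref{lem_comp_def} equals $1$ on every term with primitive output, because the dimension constraint forces $2(d_1+d_2)=\deg(\tau)/2$ there. That lemma is stated for $d_1,d_2\geq 1$, but for $k=1$ the same divisor-axiom computation works for every $(d_1,d_2)\neq(0,0)$, so using it in bidegrees $(1,0)$ and $(0,1)$ is legitimate. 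Thus Steps~2--3 prove the first-order statement independently of Step~1. They also confirm that the primitive eigenvalue matches the diagonal entry $\lambda$ of Proposition~\ref{deformation}.
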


\begin{proof}
The argument is similar to the one in the proof of Proposition \ref{zero-antiinvariant}. First notice that $\mathrm{Eu}_{t\tau}\star_t$ stabilizes $H^\bullet(X)^{\mathrm{prim}}$. Indeed, Gromov-Witten invariants with only one insertion in $H^\bullet(X)^{\mathrm{prim}}$ vanish by the irreducibility of the monodromy action on $H^\bullet(X)^{\mathrm{prim}}$. Then by Lemma \ref{lem_comp_def} and the fact that $\tau$ is ambient, the result follows. Then $H^\bullet(X)^{\mathrm{prim}}$ contains at least one eigenvector with a certain eigenvalue $\lambda'$; again by the irreducibility of the monodromy action, every (general hence all) element in $H^\bullet(X)^{\mathrm{prim}}$ is an eigenvector with the same eigenvalue. The result follows.
\end{proof}

By this Lemma and Remark \ref{rem_irr_atom},
$E^X_{t\tau}:=\langle \alpha(t),\beta(t),\gamma(t)\rangle \oplus H^\bullet(X)^{\mathrm{prim}}$ is the generalized eigenspace of the multiplication by the Euler field $\mathrm{Eu}_{t\tau}\star_t$ associated to the eigenvalue $\tilde{\lambda}(t)$. 

\begin{lemma}
$\mathrm{Eu}_{t\tau}\star_t(\langle \alpha(t),\beta(t),\gamma(t)\rangle)\subset \langle \alpha(t),\beta(t),\gamma(t)\rangle$.
\end{lemma}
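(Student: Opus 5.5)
The claim is that $E_0^{\mathrm{amb}}(t)=\langle \alpha(t),\beta(t),\gamma(t)\rangle$ is stable under $\mathrm{Eu}_{t\tau}\star_t$ to all orders in $t$, not only to first order as in Proposition \ref{deformation}. I would deduce it structurally rather than by computation.

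\textbf{Step 1: the ambient part is stable.} Since $\tau$ is ambient, every term of $\mathrm{Eu}_{t\tau}$ lies in $H^\bullet(X)^{\mathrm{amb}}$. The ambient cohomology is the invariant part under the covering involution. By the monodromy argument in Lemma \ref{lem_mult_id_antiinv}, any Gromov--Witten invariant with exactly one insertion in $H^\bullet(X)^{\mathrm{prim}}$ and all other insertions ambient vanishes, because the monodromy acts irreducibly and nontrivially on the primitive part. Hence, for $u$ ambient, the coefficient of a primitive class $\sigma_c$ in $\mathrm{Eu}_{t\tau}\star_t u$ is a sum of invariants $\langle \mathrm{Eu},u,\sigma_c^\vee,\tau^e\rangle$ with a single primitive insertion, and these vanish. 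So $\mathrm{Eu}_{t\tau}\star_t$ preserves $H^\bullet(X)^{\mathrm{amb}}[q_1,q_2][[t]]$. It also preserves $H^\bullet(X)^{\mathrm{prim}}$, and the two summands are orthogonal.

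\textbf{Step 2: identify $\langle\alpha(t),\beta(t),\gamma(t)\rangle$ with a generalized eigenspace inside the ambient part.} Set $E^X_{t\tau}:=\langle \alpha(t),\beta(t),\gamma(t)\rangle \oplus H^\bullet(X)^{\mathrm{prim}}$. The paragraph preceding the lemma says this is the generalized eigenspace of $\mathrm{Eu}_{t\tau}\star_t$ for the eigenvalue $\tilde\lambda(t)$. In particular it is preserved by the operator. Intersecting with the ambient part from Step 1 gives
$$E^X_{t\tau}\cap H^\bullet(X)^{\mathrm{amb}}=\langle \alpha(t),\beta(t),\gamma(t)\rangle,$$
since $\alpha(t),\beta(t),\gamma(t)$ are ambient by construction. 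The intersection of two invariant subspaces is invariant, which gives the claim.

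I would make Step 2 precise directly, without appealing to the earlier sentence. At $t=0$, Proposition \ref{comput} shows that the restriction of $\mathrm{Eu}$ to the ambient part has kernel $E_0^{\mathrm{amb}}$ together with six nonzero simple eigenvalues. By the standard perturbation statement over $\CC(q_1,q_2)[[t]]$ (Hensel's lemma, or the Riesz projector), the ambient part then splits invariantly into a three-dimensional generalized eigenspace $E(t)$ deforming $E_0^{\mathrm{amb}}$ and the six deformed simple eigenlines.

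\textbf{Step 3: main obstacle.} The hard part is showing that $E(t)$ actually equals $\langle \alpha(t),\beta(t),\gamma(t)\rangle$ beyond first order. The vectors $\alpha(t),\beta(t),\gamma(t)$ are only specified modulo $O(t^2)$, so I would read the lemma as asserting that the higher-order terms can be chosen so that the span equals $E(t)$. To justify this, I would define $\alpha(t),\beta(t),\gamma(t)$ as the images of $\alpha,\beta,\gamma$ under the spectral projector onto $E(t)$. Their first-order terms then agree with Proposition \ref{deformation} up to elements of $E_0^{\mathrm{amb}}$, because the first-order invariant complement is uniquely determined modulo the kernel, where the relevant linear system is solvable since the six other eigenvalues are nonzero. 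Invariance then follows from Step 1 and the commutation of the projector with the operator.
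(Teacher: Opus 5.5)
Your Steps 1 and 2 are essentially the paper's argument. Irreducibility of the monodromy kills every invariant with a single primitive insertion, so $\mathrm{Eu}_{t\tau}\star_t$ preserves $H^\bullet(X)^{\mathrm{amb}}$ because $\tau$ is ambient, and the paper's ``the result follows'' is exactly your intersection of $E^X_{t\tau}$ with the ambient part; your Step 3, which defines $\alpha(t),\beta(t),\gamma(t)$ to all orders via the spectral projector onto the perturbed three-dimensional invariant subspace, adds precision the paper leaves implicit, since it only fixes these vectors to first order and takes the generalized-eigenspace statement for granted.
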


\begin{proof}
This is once again a consequence of the irreducibility of the monodromy action on $H^\bullet(X)^{\mathrm{prim}}$, which implies that Gromov-Witten invariants with insertions with only one element in $H^\bullet(X)^{\mathrm{prim}}$ vanish. Hence, since $\tau$ is an ambient class, the image of an ambient class via $\mathrm{Eu}_{t\tau}\star_t$ is again an ambient class, and the result follows.
\end{proof}

As a consequence we obtain the following result.

\begin{prop}
\label{prop_def_matrix}
The matrix of the multiplication $\mathrm{Eu}_{t\tau}\star_t$ by the Euler vector field deformed along $\tau$ and restricted to $E^X_{t\tau}$ is given by
$$
\begin{pmatrix}
 M(t)& 0 \\
0 & \tilde{\lambda}(t)\mathrm{id}  \end{pmatrix}, 
$$
where $M(t)$ is the restriction to $\langle \alpha(t),\beta(t),\gamma(t)\rangle$, and $\tilde{\lambda}(t)\mathrm{id}$ is the restriction to $H^\bullet(X)^{\mathrm{prim}}$. The matrix $M(t)$ is generically of rank one, with only eigenvalue $\tilde{\lambda}(t)$, and coincides  at first order in $t$ with the matrix in Proposition \ref{deformation}.
\end{prop}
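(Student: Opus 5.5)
The plan is to assemble the statement from the three preceding lemmas and then argue by rigidity for the claims about $M(t)$.

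\emph{Block structure.} Since $\tau$ is ambient, the lemma just proved says that $\mathrm{Eu}_{t\tau}\star_t$ maps ambient classes to ambient classes. Lemma \ref{lem_mult_id_antiinv} says that it acts on $H^\bullet(X)^{\mathrm{prim}}$ by $\tilde\lambda(t)\mathrm{id}$. As $E^X_{t\tau}=\langle\alpha(t),\beta(t),\gamma(t)\rangle\oplus H^\bullet(X)^{\mathrm{prim}}$ and both summands are invariant, the matrix is block diagonal. The lower block is $\tilde\lambda(t)\mathrm{id}$, and the upper block $M(t)$ is the restriction to $\langle\alpha(t),\beta(t),\gamma(t)\rangle$. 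To first order, $M(t)$ agrees with the matrix of Proposition \ref{deformation}, because $\alpha(t),\beta(t),\gamma(t)$ were constructed there precisely so that this holds.

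\emph{The only eigenvalue of $M(t)$ is $\tilde\lambda(t)$.} I would use the indecomposability from Corollary \ref{rem_irr_atom}. The $24$-dimensional space $E^X_{t\tau}$ is a single generalized eigenspace of $\mathrm{Eu}_{t\tau}\star_t$, deforming the generalized kernel at $t=0$. By deformation invariance we may assume $X$ is rational, so that this eigenspace is identified with the quantum cohomology of a K3 surface $\Sigma$. The quantum cohomology of a K3 has a single eigenvalue along any deformation, so the full space carries one eigenvalue. That eigenvalue must be $\tilde\lambda(t)$, since this is the eigenvalue on the primitive part. At first order this is consistent with Proposition \ref{deformation}, whose matrix is lower triangular with diagonal entries all equal to $t\lambda$, and $\tilde\lambda=t\lambda+O(t^2)$.

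\emph{$M(t)$ is generically of rank one.} To first order, $M(t)-\tilde\lambda(t)\mathrm{id}$ is $t\mu E_{31}$. This is nonzero of rank one as soon as $\mu=a_{11}-a_{12}+a_{22}-2(q_1+q_2)a_{1122}\neq0$, which holds for generic $\tau$. So the nilpotent part of $M(t)$ has rank at least one generically.

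For the upper bound, I would again compare with the K3 side. For a K3 surface deformed along an arbitrary direction, the Euler-field operator minus its eigenvalue has rank at most one; this is the "$\gamma=1$" / rank-one phenomenon used in Theorem \ref{K3}. There the two copies of $F$ in $H^\bullet(K3)$ are linked by a single Jordan block. Conjugacy of the operators on isomorphic atoms then transfers the bound to $E^X_{t\tau}$. Since the operator on the primitive part is exactly $\tilde\lambda(t)\mathrm{id}$, the nilpotent part is supported on $M(t)$. Hence $M(t)-\tilde\lambda(t)\mathrm{id}$ has rank exactly one for generic $\tau$. Here "rank one" should be read as: $M(t)$ equals $\tilde\lambda(t)\mathrm{id}$ plus a rank-one nilpotent, which matches the first-order matrix divided by $t$.

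The main obstacle is making the rank-at-most-one bound rigorous beyond first order. This requires the K3 comparison through the weak factorization, or else an all-order computation, which the paper admits is out of reach. I would try to derive it from associativity together with the $\mathfrak{sl}_2$-type structure of $H^\bullet(\Sigma)$: on a K3, $\mathrm{Eu}\star$ minus a scalar maps $H^0\to H^2\to H^4$ only through the trivial action of $1$ and $\mathrm{pt}$.
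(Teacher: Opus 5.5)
Your proposal is correct and follows the paper's route. The paper obtains this proposition as a direct consequence of the two preceding lemmas (stability of the ambient part, and $\tilde\lambda(t)\mathrm{id}$ on $H^\bullet(X)^{\mathrm{prim}}$), of Corollary~\ref{rem_irr_atom} for the single eigenvalue, and of Proposition~\ref{deformation} for the first-order form. Your reading of ``rank one'' as the rank of $M(t)-\tilde\lambda(t)\mathrm{id}$ only makes explicit what the paper leaves tacit: the lower bound from $\mu\neq 0$, and the upper bound from the K3 comparison, via the same conjugacy-of-ranks argument used in the proof of Theorem~\ref{K3}.
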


\section{Appendix}

Here is the Macaulay2 script we used in order to work in the small ambient quantum cohomology of a Verra fourfold. This ring  $S$ is constructed at the end of the program. It is obtained from the (small) quantum  Chevalley formula only (in the ambient cohomology), since all ambient classes are multiples of hyperplane classes. The degrees we use do not reflect the actual degrees: with our choices of degrees, the result of a product is expressed as a $\QQ[q_1,q_2]$-linear combination of $1,h_1,h_2,h_1^2,h_1h_2,h_2^2,h_1^2h_2,h_1h_2^2,h_1^2h_2^2$.

\begin{verbatim}
R=QQ[q1,q2,h1,h2, h11, h12, h22, h112, h122, h1122,
Degrees=>{1,1,10,10,10,10,10,10,10,10}];
R1=h1*h1-h11-2*q1;
R2=h1*h2-h12;
R3=h2*h2-h22-2*q2;
R4=h11*h1-2*q1*h1-4*q1*h2;
R5=h12*h1-h112-2*q1*h2;
R6=h22*h1-h122;
R7=h22*h2-2*q2*h2-4*q2*h1;
R8=h12*h2-h122-2*q2*h1;
R9=h11*h2-h112;
R10=h112*h1-2*q1*h12-4*q1*h22-8*q1*q2;
R11=h122*h1-h1122-2*q1*h22-8*q1*q2;
R12=h1122*h1-2*q1*h122-8*q1*q2*h1-8*q1*q2*h2;
R13=h122*h2-2*q2*h12-4*q2*h11-8*q1*q2;
R14=h112*h2-h1122-2*q2*h11-8*q1*q2;
R15=h1122*h1-2*q1*h122-8*q1*q2*h1-8*q1*q2*h2;
S=R/ideal(R1,R2,R3,R4,R5,R6,R7,R8,R9,R10,R11,R12,R13,R14);
\end{verbatim}

\end{document}